\def\@settitle{\begin{center}%
    \bfseries
 \normalfont\LARGE\@title
  \end{center}%
}
\def\@setauthors{\begin{center}%
 \normalsize\@author
  \end{center}%
}
\numberwithin{equation}{section}
\renewcommand{\cal}{\mathcal}
\newcommand{\cE}{{\cal E}}
\newcommand{\cG}{{\cal G}}
\newcommand{\cN}{{\cal N}}
\newcommand{\cP}{{\cal P}}
\newcommand{\cS}{{\mathcal S}}
\newcommand{\cU}{{\mathcal U}}
\newcommand{\fb}{{\mathfrak b}}
\newcommand{\fc}{{\mathfrak c}}
\newcommand{\fn}{{\mathfrak n}}
\newcommand{\bma}{{\bm{a}}}
\newcommand{\bme}{{\bm{e}}}
\newcommand{\bmn}{{\bm{n}}}
\newcommand{\bmt} {{\bm t }}
\newcommand{\bmu}{{\bm{u}}}
\newcommand{\bmv}{{\bm{v}}}
\newcommand{\bmw}{{\bm{w}}}
\newcommand{\bmx}{{\bm{x}}}
\newcommand{\bmmu}{{\bm \mu}}
\newcommand{\rd}{{\rm d}}
\newcommand{\ri}{\mathrm{i}}
\newcommand{\bF}{{\mathbb F}}
\newcommand{\bE}{\mathbb{E}}
\newcommand{\bP}{\mathbb{P}}
\newcommand{\bR}{{\mathbb R}}
\newcommand{\bS}{\mathbb S}
\newcommand{\bZ}{\mathbb{Z}}
\newcommand{\al}{\alpha}
\newcommand{\la}{\lambda}
\DeclareMathOperator{\OO}{O}
\DeclareMathOperator{\oo}{o}
\newcommand{\deq}{\mathrel{\mathop:}=} 
\renewcommand{\leq}{\leqslant}
\renewcommand{\geq}{\geqslant}
\newcommand{\del}{\partial}
\newcommand{\beq}{\begin{equation}}
\newcommand{\eeq}{\end{equation}}
\theoremstyle{plain} 
\newtheorem{theorem}{Theorem}[section]
\newtheorem*{theorem*}{Theorem}
\newtheorem*{lemma*}{Lemma}
\newtheorem*{corollary*}{Corollary}
\newtheorem{proposition}[theorem]{Proposition}
\newtheorem*{proposition*}{Proposition}
\newtheorem*{assumption*}{Assumption}
\newtheorem*{definition*}{Definition}
\newtheorem*{example*}{Example}
\newtheorem{remark}[theorem]{Remark}
\newtheorem*{remark*}{Remark}
\newtheorem*{remarks*}{Remarks}
\def\author#1{\par
    {\centering{\authorfont#1}\par\vspace*{0.05in}}
}
\def\titlefont{\fontsize{13}{15}\bfseries\boldmath\selectfont\centering{}}
\def\authorfont{\fontsize{13}{15}}
\let\affiliationfont\rhfont
\def\address#1{\par
    {\centering{\affiliationfont#1\par}}\par\vspace*{11pt}
}
\def\body{
\setcounter{footnote}{0}
\def\thefootnote{\alph{footnote}}
\def\@makefnmark{{$^{\rm \@thefnmark}$}}
}
\def\title#1{
    \thispagestyle{plain}
    \vspace*{-14pt}
    \vskip 79pt
    {\centering{\titlefont #1\par}}%
    \vskip 1em
}
\newcommand{\Mod}[1]{\ (\mathrm{mod}\ #1)}
\newcommand{\spn}{\mathrm{span}}
\begin{document}

\title{Invertibility of adjacency matrices for random $d$-regular directed graphs}

\vspace{1.2cm}

 \author{Jiaoyang Huang}
\address{Harvard University\\
   E-mail: jiaoyang@math.harvard.edu}

~\vspace{0.3cm}

\begin{abstract}
Let $d\geq 3$ be a fixed integer, and a prime number $p$ such that $\gcd(p,d)=1$. Let $A$ be the adjacency matrix of a random $d$-regular directed graph on $n$ vertices. We show that as a random matrix in $\bF_p$, 
\begin{align*}
\bP(\text{$A$ is singular in $\bF_p$})\leq \frac{1+\oo(1)}{p-1},
\end{align*}
as $n$ goes to infinity. As a consequence, as a random matrix in $\bR$,
\begin{align*}
\bP(\text{$A$ is singular in $\bR$})=\oo(1),
\end{align*}
as $n$ goes to infinity. This answers an open problem by Frieze \cite{MR3728474} and Vu \cite{MR2432537, MR3727622}, for random $d$-regular bipartite graphs. The proof combines a local central limit theorem and a large deviation estimate.
\end{abstract}

\section{Introduction}
The most famous combinatorial problem concerning random matrices is perhaps the ``singularity'' problem. In a standard setting, when the entries of the $n\times n$ matrix are i.i.d. Bernoulli random variables (taking values $\pm1$ with probability $1/2$), this problem was first done by Koml{\'o}s \cite{MR0221962,MR0238371}, where he showed the probability of being singular is $\OO(n^{-1/2})$.  This bound was significantly improved by
Kahn, Koml{\'o}s and Szemer{\'e}di \cite{MR1260107} to an exponential bound
\begin{align*}
\bP(\text{random Bernoulli matrix is singular})< c^n,
\end{align*}
for $c=0.999$, for $c=3/4+\oo(1)$ by Tao and Vu \cite{MR2291914}, and by Rudelson and Vershynin \cite{MR2407948} . The often conjectured optimal value of
$c$ is $1/2 + \oo(1)$, and the best known value $c=1/\sqrt{2} + \oo(1)$ is due to Bourgain, Vu and Wood  \cite{MR2557947}.
Analogous results on singularity of symmetric Bernoulli matrices
were obtained in \cite{MR3158627, MR2891529,MR2267289}.

The above question can be reformulated for the
adjacency matrices of random graphs, either directed
or undirected. Both directed and undirected graphs are abundant in real life. One of the widely studied model in the undirected random graph literature is the Erd{\H o}s-R{\'e}nyi 
graph $G(n,p)$. It was shown by Costello and Vu in \cite{MR2446482}, that the adjacency matrix of $G(n,p)$ is nonsingular with high probability whenever the edge connectivity probability $p$ is above the connectivity threshold $\ln n/n$. For directed  Erd{\H o}s-R{\'e}nyi
graph, a quantitative estimate on the smallest singular value was obtained by Basak and Rudelson in \cite{MR3620692, BR}.

Another intensively studied random graph model is the random $d$-regular graph. For the adjacency matrix of random $d$-regular graphs, its entries are no longer independent. The lack of independence poses significant difficulty for the singularity problem of random $d$-regular graphs. For undirected random $d$-regular graphs, when $d\geq n^{c}$ with any $c>0$, it follows from the bulk universality result \cite{fix2} by Landon, Sosoe and Yau, the adjacency matrix is nonsingular with high probability. For random $d$-regular directed graphs, it was first proven by Cook in \cite{MR3602844}, the adjacency matrix is nonsingular with high probability when $C\ln^2 n\leq d\leq n-C\ln^2 n$. Later in \cite{MR3545253}, it was proven by Litvak, Lytova, Tikhomirov, Tomczak-Jaegermann and Youssef that, when $C\leq d\leq n/(C\ln^2 n)$, the singularity probability is bounded by $\OO(\ln^3 d/\sqrt{d})$. Quantitative estimates on the smallest singular values were derived in \cite{Nik1, basak2018, Alex1}.

For random $d$-regular graphs, the most challenging case is when $d$ is a constant. 
In \cite{Alex2}, it was proven by Litvak, Lytova, Tikhomirov, Tomczak-Jaegermann and Youssef that the adjacency matrix of random $d$-regular directed graphs has rank at least $n-1$ with high probability. In this paper we prove that the adjacency matrix of random $d$-regular directed graphs is nonsingular with high probability. One may identify a $d$-regular directed graph with a random $d$-regular bipartite graph.
Our result answers an open problem 
first appeared in \cite[Conjecture 8.4]{MR2432537} by Vu, and later collected in \cite[Section 9, Problem 7]{MR3728474} by Frieze  and \cite[Conjecture 5.8]{MR3727622} by Vu.

One  approach to estimate the singularity probability of random matrices is to decompose the null vectors $\bS^{n-1}$ into subsets according to different structural properties, e.g., combinatorial dimension \cite{MR1260107,MR2291914}, compressible and imcompressible vectors \cite{MR2407948, Nik1,MR3602844,basak2018,MR2569075}, and statistics of jumps \cite{MR3620692, BR,MR3545253,Alex1,Alex3}.  Different from previous works, which directly study the singularity probability over $\bR$, the key new idea in this paper is to study the singularity probability of adjacency matrices over a finite field $\bF_p$. At first glance, this may seem wasteful, as we discard a great amount of information. Moreover, as a matrix over $\bF_p$, the determinant of the adjacency matrix takes value in $\bF_p$. One expects that the determinant takes value zero with probability about $1/p$. In other words, the adjacency matrix over $\bF_p$ may be singular with positive probability. However, the benefit is that, over finite field $\bF_p$ we can better understand the arithmetic structure of the null vectors, which enables us to obtain a sharp estimate of the singularity probability. We decompose the null vectors $\bF_p^n$ into two classes, the equidistributed class where each number has approximately the same density, and the non-equidistributed class. We estimate the number of adjacency matrices which have a null vector in the equidistributed class using a local central limit theorem, and the number of adjacency matrices which have a null vector in the non-equidistributed class using a large deviation estimate.  In \cite{FLMS}, Ferber, Luh, McKinley and  Samotij use a similar idea to prove resilience results for random Bernoulli matrices.

After the appearance of the current preprint, the asymptotic nonsingularity of adjacency
matrices of random $d$-regular directed and undirected graphs are proven by M{\'e}sz{\'a}ros \cite{AM}, and by 
Nguyen and Wood \cite{NW}. The work of M{\'e}sz{\'a}ros \cite{AM} studies the distribution of the sandpile group of random $d$-regular graphs, and determines the distribution of $p$-Sylow subgroup of the sandpile group. Based on \cite{AM}, Nguyen and Wood in \cite{NW}, study the distribution of the cokernels of adjacency matrices of random $d$-regular graphs, and observe that the convergence of such distributions implies asymptotic nonsingularity of the matrices.
In a forthcoming paper \cite{Huang}, we give another proof that the adjacency matrix of undirected $d$-regular graphs is nonsingular with high probability, which uses the same ideas as in this paper, however, technically more complicated. 

\noindent\textbf{Acknowledgement.} I am thankful to Elchanan Mossel and Mustazee Rahman for suggesting the problem of studying adjacency matrices of random $d$-regular graphs over finite fields. I am also grateful to Weifeng Sun for enlightening discussions, and to Nicholas Cook, Van Vu and Melanie Wood for helpful comments on the first draft of this paper. 

\subsection{Main results}

We study the configuration model of random $d$-regular directed graphs, introduced by Bollob{\'a}s in \cite{MR595929} (ideas similar to the configuration model were also presented in \cite{MR0505796,MR545196,MR527727}). By a contiguity argument, our main results also hold for other random $d$-regular directed graph models, e.g. the uniform model and the sum of $d$ random permutation matrices. For the configuration model, one generates a random $d$-regular directed graph by the following procedure:
\begin{enumerate}
\item Associate to each vertex $k \in \{1,2,\cdots,n\}$ a fiber $F_k$ of $d$ points, so that there are
$
\left| \cup_{k\in \{1,2,\cdots,n\}}F_k\right|=nd
$
points in total.
\item Select a permutation $\cP$ of the $nd$ points uniformly at random.
\item For any vertex $k \in \{1,2,\cdots,n\}$, and point $k'\in F_k$, we add a directed edge from vertex $k$ to vertex $\ell$ if the point $\cP(k')$ belongs to fiber $F_{\ell}$.
\end{enumerate}
We denote the $d$-regular directed graphs obtained from the above procedure by $\mathsf{M}_{n,d}$, which is a multiset. It is easy to see from the construction procedure that $|\mathsf{M}_{n,d}|=(nd)!$. Let $\cG\in \mathsf{M}_{n,d}$, one may identify $\cG$ with a random $d$-regular bipartite graph on $n + n$ vertices in the obvious way. We denote $A$ = $A(\cG)$ the adjacency matrix of $\cG$, i.e. $A_{k\ell}$ is the number of directed edges from vertex $k$ to vertex $\ell$.

\begin{theorem}\label{thm:smcount}
Let $d\geq 3$ be a fixed integer, and a prime number $p$ such that $\gcd(p,d)=1$. Then in $\bF_p$, 
\begin{align}\label{e:smcount}
\sum_{\bmv\in \bF_p^n\setminus{\bm0}}|\{\cG\in \mathsf{M}_{n,d}: A(\cG)\bmv=\bm0\}|= (1+\oo(1))|\mathsf{M}_{n,d}|,
\end{align}
as $n$ goes to infinity. 
\end{theorem}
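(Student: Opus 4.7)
The quantity on the left equals $|\mathsf{M}_{n,d}|$ times the expected number of nontrivial null vectors of $A(\cG)$ in $\bF_p^n$; my goal is to show this expectation is $1+o(1)$. This matches the heuristic that the matrix is singular over $\bF_p$ with probability $\approx 1/(p-1)$ and typically has one-dimensional kernel. The plan is to interchange sum and expectation, fix $\bmv\ne\bm{0}$, and estimate $\bP(A(\cG)\bmv = \bm{0})$ uniformly.

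I begin with two observations. First, since every column of $A$ has sum $d$, we have $\bm{1}^T A\bmv = d\sum_\ell v_\ell$, so $A\bmv\equiv\bm{0}\pmod p$ forces $\sum v_\ell\equiv 0\pmod p$ (using $\gcd(p,d)=1$). Only vectors in the hyperplane $H := \{\bmv\ne \bm{0} : \sum v_\ell\equiv 0\pmod p\}$, of size $p^{n-1}-1$, can contribute. Second, in the configuration model the event $A\bmv=\bm{0}$ has a clean combinatorial reformulation: assign each point $x\in F_k$ the label $v_\ell$ where $\cP(x)\in F_\ell$; the multiset of labels is $\cW = \{v_\ell \text{ with multiplicity } d : 1\leq \ell\leq n\}$, and the event says that the labels on each fiber $F_k$ sum to $0\pmod p$. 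Under uniform $\cP$, the label sequence is a uniform random permutation of $\cW$.

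I split $H$ into an equidistributed class
\[
H_{\mathrm{eq}} := \bigl\{\bmv\in H : \sup_{j\in\bF_p}\bigl|\#\{\ell:v_\ell=j\}/n - 1/p\bigr|\leq n^{-1/2+\delta}\bigr\}
\]
and its complement $H_{\mathrm{neq}}$. A Chernoff bound gives $|H_{\mathrm{eq}}| = (1-o(1))p^{n-1}$. For $\bmv\in H_{\mathrm{eq}}$ I apply a local central limit theorem for the joint law of the $n$ fiber sums of a uniform random arrangement of the near-equidistributed multiset $\cW$: because $\sum_k (A\bmv)_k = d\sum_\ell v_\ell$ vanishes deterministically, only $n-1$ constraints remain, and the LCLT shows that the fiber-sum vector is asymptotically uniform on $\{s\in\bF_p^n:\sum_k s_k=0\}$. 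This yields $\bP(A\bmv=\bm{0})=(1+o(1))p^{-(n-1)}$ uniformly over $H_{\mathrm{eq}}$, so $H_{\mathrm{eq}}$ contributes $(1+o(1))|\mathsf{M}_{n,d}|$ to the sum.

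The main difficulty is controlling $H_{\mathrm{neq}}$, which I partition by the type (empirical distribution) $\mu$ of $\bmv$. Vectors of type $\mu$ lying in $H$ number at most $\exp(nH(\mu)-\log p + o(n))$, with $H(\mu)$ the Shannon entropy. A Fourier expansion over $\bF_p$ produces
\[
\bP(A\bmv=\bm{0}) \leq p^{-(n-1)}\bigl(1+(p-1)\max_{t\ne 0}|\hat\mu(t)|^d\bigr)^n, \qquad \hat\mu(t) = \sum_{j\in\bF_p}\mu(j)\,\re^{2\pi\i jt/p}.
\]
The per-type contribution is thus $\exp\bigl(n[H(\mu)-\log p+\log(1+(p-1)\max_{t\ne 0}|\hat\mu(t)|^d)]\bigr)$. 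For $\mu$ near uniform with $\varepsilon := \mu - 1/p$, Taylor expansion gives $H(\mu)-\log p = -\Theta(\|\varepsilon\|^2)$ and $\log(1+(p-1)\max_{t\ne 0}|\hat\mu(t)|^d) = O(\|\varepsilon\|^d)$; since $d\geq 3$, the quadratic dominates the higher-order term, producing genuine exponential decay exactly when $\|\varepsilon\|\gg n^{-1/2}$, which is precisely the condition defining $H_{\mathrm{neq}}$. A crude Fourier bound handles $\mu$ far from uniform, and summing over types yields $o(|\mathsf{M}_{n,d}|)$. The hardest part is the sharp bookkeeping at the boundary $\|\varepsilon\|\sim n^{-1/2}$, where the LCLT error from the equidistributed regime must match continuously with the large-deviation gain here; the hypothesis $d\geq 3$ is essential, as it is what opens the gap between the quadratic entropy deficit and the $O(\|\varepsilon\|^d)$ Fourier term.
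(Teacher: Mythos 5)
Your overall architecture is the same as the paper's: reformulate $A(\cG)\bmv=\bm0$ in the configuration model as a constraint on fiber labels, split $\bF_p^n$ by empirical type, use a local CLT in the near-uniform regime and an entropy-versus-probability (large deviation) bound elsewhere; and your intermediate claim $\bP(A(\cG)\bmv=\bm0)=(1+\oo(1))p^{-(n-1)}$ for equidistributed $\bmv$ is correct. Two caveats on that half, though. The LCLT you invoke is for the joint law of all $n$ fiber sums under a random arrangement of a fixed multiset, i.e.\ a local limit theorem in growing dimension with multiplicative error at a point; that is not an off-the-shelf result. The paper avoids it by summing over all $\bmv$ of a fixed type first: Proposition \ref{p:walkrep} converts that sum into an exact count involving an i.i.d.\ sum of $n$ vectors uniform on $\cU_{d,p}$, so only a fixed ($p$-dimensional) LCLT is needed. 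Similarly, your Fourier product bound $\bP(A\bmv=\bm0)\leq p^{-(n-1)}\bigl(1+(p-1)\max_{t\neq 0}|\hat\mu(t)|^d\bigr)^n$ treats the fibers as independent and the labels as i.i.d.\ draws from $\mu$, whereas in the configuration model the labels are a permutation of a fixed multiset; this comparison needs an argument (the paper again sidesteps it via the exact count and Stirling).

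The genuine gap is in $H_{\mathrm{neq}}$. Your exponent $H(\mu)-\ln p+\ln\bigl(1+(p-1)\max_{t\neq0}|\hat\mu(t)|^d\bigr)$ is not negative on all non-uniform types: it tends to $0$ from the wrong side as $\mu$ approaches a point mass. Concretely, if $\bmv$ has exactly $m$ nonzero coordinates with $m=\OO(1)$, then $\max_{t\neq0}|\hat\mu(t)|^d=1-\OO(m/n)$, so your bound on $\bP(A\bmv=\bm0)$ is only $p\,e^{-\OO(m)}$, while there are $\asymp n^m$ such vectors; your per-type total then diverges like $n^m e^{-\OO(m)}$ instead of being $\oo(1)$. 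This regime cannot be waved away by "a crude Fourier bound far from uniform": these sparse vectors are exactly the source of the true $\Theta(1/n^{d-2})$ singularity probability (the configurations of Figure \ref{f:K2d}), so no exponential bound exists there and a separate polynomial-in-$n$ counting argument is mandatory. This is precisely why the paper's rate function (Proposition \ref{p:ldpbound}) vanishes at the second point $\fn_0=1$ and why the proof of Proposition \ref{p:ldp} devotes two of its four cases to types with $|n_0/n-1|\leq\delta$, using a perturbation expansion around that point and the structural fact that every nontrivial $\bmw_j\in\cU_{d,p}$ has $\bmw_j(0)\leq d-2$ and at least two nonzero labels, which yields the $\OO(n^{-(d-2)})$ total in \eqref{e:case3}. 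A further, smaller defect of the same kind: replacing $\frac1p\sum_t\hat\mu(t)^d$ by $\frac1p\bigl(1+(p-1)\max_{t\neq0}|\hat\mu(t)|^d\bigr)$ is so lossy that for large $p$ the exponent can even become positive at spread-out but concentrated types (e.g.\ $\mu$ uniform on two residues), and near $\delta_a$ with $a\neq0$ your bound is again trivial although the true probability is $0$ there. Until the near-point-mass regime is handled by an argument of the paper's type, the proof does not close.
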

If an adjacency matrix $A(\cG)$ is singular as a matrix in $\bF_p$, then we have
\begin{align*}
|\{\bmv\in \bF_p^n\setminus{\bm0}: A(\cG)\bmv=\bm0 \}|\geq p-1.
\end{align*} 
Therefore it follows from Theorem \ref{thm:smcount},
\begin{align*}
(p-1)|\{\cG\in \mathsf{M}_{n,d}: A(\cG) \text{ is singular in } \bF_p\}|
\leq \sum_{\bmv\in \bF_p^n\setminus{\bm0}}|\{\cG\in \mathsf{M}_{n,d}: A(\cG)\bmv=\bm0\}|
= (1+\oo(1))|\mathsf{M}_{n,d}|,
\end{align*}
and we obtain the next theorem.

\begin{theorem}\label{thm:spFp}
Let $d\geq 3$ be a fixed integer, and a prime number $p$ such that $\gcd(p,d)=1$. Let $A$ be the adjacency matrix of a random $d$-regular directed graph on $n$ vertices. Then as a random matrix in $\bF_p$, 
\begin{align*}
\bP(\text{$A$ is singular in $\bF_p$})\leq \frac{1+\oo(1)}{p-1},
\end{align*}
as $n$ goes to infinity. 
\end{theorem}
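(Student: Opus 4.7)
The plan is to obtain Theorem \ref{thm:spFp} as an immediate counting consequence of Theorem \ref{thm:smcount}. The key linear-algebraic input is that if a matrix is singular over the field $\bF_p$, then its kernel is a nonzero $\bF_p$-vector subspace of $\bF_p^n$, hence contains at least $p$ elements and in particular at least $p-1$ nonzero vectors. This lets me convert an upper bound on the number of pairs (graph, nonzero null vector) into an upper bound on the number of singular graphs.

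Concretely, I would argue
\begin{align*}
(p-1)\,|\{\cG \in \mathsf{M}_{n,d}: A(\cG)\text{ singular over }\bF_p\}|
\leq \sum_{\bmv \in \bF_p^n \setminus \{\bm 0\}} |\{\cG \in \mathsf{M}_{n,d}: A(\cG)\bmv = \bm 0\}|,
\end{align*}
since each singular graph $\cG$ is counted on the right-hand side for at least $p-1$ choices of $\bmv$, namely the nonzero elements of its kernel. By Theorem \ref{thm:smcount} the right-hand side equals $(1+\oo(1))|\mathsf{M}_{n,d}|$, and dividing through by $(p-1)|\mathsf{M}_{n,d}|$ yields
\begin{align*}
\bP\bigl(A\text{ is singular in }\bF_p\bigr) \leq \frac{1+\oo(1)}{p-1},
\end{align*}
as claimed.

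No real obstacle arises at this stage: the argument is simply a double counting combined with the elementary vector-space fact above, and it does not require any refined structural information about the null vectors. All of the actual difficulty has been absorbed into Theorem \ref{thm:smcount}, whose proof requires decomposing $\bF_p^n \setminus \{\bm 0\}$ into equidistributed and non-equidistributed classes, using a local central limit theorem to handle the former and a large deviation estimate to handle the latter, as outlined in the introduction.
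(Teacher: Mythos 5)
Your proposal is correct and is exactly the paper's argument: the paper also notes that a singular $A(\cG)$ over $\bF_p$ has at least $p-1$ nonzero null vectors and deduces $(p-1)\,|\{\cG: A(\cG)\ \text{singular}\}| \leq \sum_{\bmv\neq \bm 0}|\{\cG: A(\cG)\bmv=\bm 0\}| = (1+\oo(1))|\mathsf{M}_{n,d}|$ from Theorem \ref{thm:smcount}. Nothing is missing.
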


The entries of $A(\cG)$ are all integers. Therefore, if $A(\cG)$ is singular in $\bR$, it is also singular in any finite field $\bF_p$. The next theorem follows  by taking $p$ large in Theorem \ref{thm:spFp}.

\begin{theorem}\label{thm:spR}
Let $d\geq 3$ be a fixed integer. Let $A$ be the adjacency matrix of a random $d$-regular directed graph on $n$ vertices. Then as a random matrix in $\bR$, 
\begin{align*}
\bP(\text{$A$ is singular in $\bR$})=\oo(1),
\end{align*}
as $n$ goes to infinity. 
\end{theorem}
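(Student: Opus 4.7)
The plan is to derive Theorem \ref{thm:spR} as a direct corollary of Theorem \ref{thm:spFp} by exploiting the integrality of the adjacency matrix. First I would observe that, by construction, the entries of $A = A(\cG)$ lie in $\{0, 1, 2, \ldots\} \subset \bZ$, so $\det A$ is an integer. Consequently, if $A$ is singular over $\bR$, meaning $\det A = 0$, then $\det A \equiv 0 \pmod{p}$ for every prime $p$, which is exactly the statement that the reduction of $A$ modulo $p$ is singular as a matrix over $\bF_p$. This gives the pointwise event containment
\begin{equation*}
\{\cG \in \mathsf{M}_{n,d} : A(\cG) \text{ is singular in } \bR\} \subseteq \{\cG \in \mathsf{M}_{n,d} : A(\cG) \text{ is singular in } \bF_p\}
\end{equation*}
for every prime $p$, and in particular the analogous inequality between probabilities.

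Next I would invoke Theorem \ref{thm:spFp} for primes $p$ with $\gcd(p,d) = 1$. Since $d \geq 3$ is fixed, there are infinitely many such primes (for instance, any prime $p > d$ works). For any fixed such $p$, the containment above combined with Theorem \ref{thm:spFp} yields
\begin{equation*}
\bP\pb{A \text{ is singular in } \bR} \leq \bP\pb{A \text{ is singular in } \bF_p} \leq \frac{1 + \oo(1)}{p-1},
\end{equation*}
where the $\oo(1)$ term tends to zero as $n \to \infty$ with $p$ held fixed. Taking $\limsup_{n \to \infty}$ on both sides gives
\begin{equation*}
\limsup_{n \to \infty} \bP\pb{A \text{ is singular in } \bR} \leq \frac{1}{p-1}.
\end{equation*}

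Finally, since the left-hand side does not depend on $p$ while the right-hand side can be made arbitrarily small by choosing $p$ large along primes coprime to $d$, I would let $p \to \infty$ to conclude $\limsup_{n \to \infty} \bP(A \text{ is singular in } \bR) = 0$, which is the desired statement. There is essentially no obstacle in this passage: all of the genuine work is concentrated in Theorem \ref{thm:smcount} and Theorem \ref{thm:spFp}, and the reduction from the $\bF_p$ bound to the $\bR$ bound is immediate from integrality of $A$ together with the existence of infinitely many admissible primes.
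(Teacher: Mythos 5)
Your proposal is correct and follows exactly the paper's own argument: integrality of the entries gives the event containment into singularity over $\bF_p$, and then one takes $p$ large (coprime to $d$) in Theorem \ref{thm:spFp}. Nothing to add.
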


\begin{remark}
The probability that the adjacency matrix of a random $d$-regular directed graph is singular is at least polynomial in $1/n$. In fact, if a $d$-regular directed graph contains the subgraph in Figure \ref{f:K2d},
\begin{figure}
\center
\includegraphics[width=0.3\textwidth]{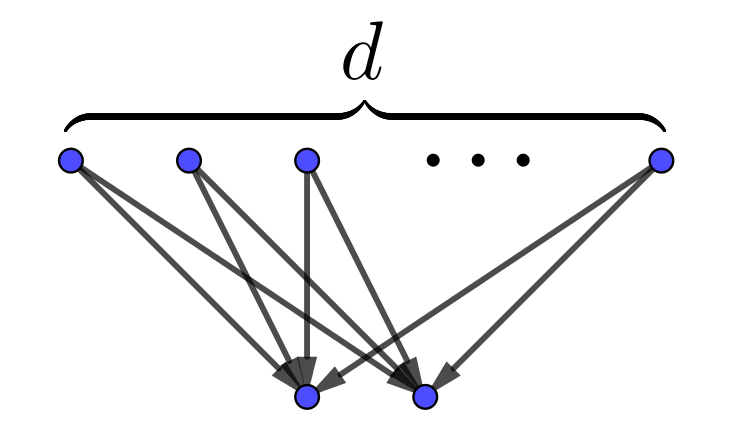}
\caption{If a $d$-regular directed graph contains the above subgraph, its adjacency matrix is singular.}
\label{f:K2d}
\end{figure}
then its adjacency matrix is singular. As a consequence, it holds that $\bP(\text{$A$ is singular in $\bR$})\geq \OO(1)/n^{d-2}$.
We can get a more quantitative estimate for $\bP(\text{$A$ is singular in $\bR$})$ in Theorem \ref{thm:spR}, by taking $p$ growing with $n$ in Theorem \ref{thm:spFp}. 
\end{remark}

\begin{remark}
Although the result in Theorem \ref{thm:spR} is stated for the configuration model, the same statement holds for other models, e.g. the uniform model and the sum of $d$ random permutation matrices,  by a contiguity argument.
\end{remark}

\section{Random Walk Interpretation}
In this section, we enumerate $|\{\cG\in \mathsf{M}_{n,d}: A(\cG)\bmv=\bm0 \text{ in } \bF_p\}|$ as the number of certain walk paths. Before stating the result, we need to introduce some notations. We define the counting function $\Phi:\cup_{k\geq 1}\bF_p^k\mapsto \bZ^p$, given by 
\begin{align*}
\Phi(a_1,a_2,\cdots, a_k)=\left(\sum_{i=1}^k \bm 1(a_i=0),\sum_{i=1}^k \bm 1(a_i=1), \cdots,\sum_{i=1}^k \bm 1(a_i=p-1)\right).
\end{align*}
We decompose the space $\bF_p^n$ as 
\begin{align*}
\bF_p^n=\bigcup_{n_0,n_1,\cdots, n_{p-1}\in \bZ_{\geq0}\atop n_0+n_1+\cdots+n_{p-1}=n}\cS(n_0,n_1,\cdots, n_{p-1}),
\end{align*}
where 
\begin{align*}
\cS(n_0,n_1,\cdots, n_{p-1})=\{\bmv=(v_1,v_2,\cdots, v_n)\in \bF_p^n: \Phi(\bmv)=(n_0,n_1,\cdots,n_{p-1})\}.
\end{align*}
The cardinality of $\cS(n_0,n_1,\cdots, n_{p-1})$ is 
\begin{align*}
\left|\cS(n_0,n_1,\cdots, n_{p-1})\right|={n\choose n_0,n_1,\cdots,n_{p-1}}.
\end{align*}
%
%
We define the multiset $\cU_{d,p}$
\begin{align}\begin{split}\label{e:defcU}
\cU_{d,p}&=\{\Phi(\bma): \bma=(a_1,a_2,\cdots,a_d)\in \bF_p^d, a_1+a_2+\cdots+a_d=0\}.
\end{split}\end{align}
For any $a_1,a_2,\cdots, a_{d-1}\in \bF_p$, there exists unique $a_d\in\bF_p$ such that $a_1+a_2+\cdots+a_d=0$. The multiset $\cU_{d,p}$ has cardinality $p^{d-1}$, i.e. $|\cU_{d,p}|=p^{d-1}$. 

\begin{proposition}\label{p:walkrep}
Let $d\geq 3$ be a fixed integer, and a prime number $p$. Fix $\bmv\in \cS(n_0,n_1,\cdots, n_{p-1})$, we have
\begin{align}\begin{split}\label{e:randomwalk}
&\phantom{{}={}}|\{\cG\in \mathsf{M}_{n,d}: A(\cG)\bmv=\bm0 \text{ in } \bF_p\}|\\
&=\left(\prod_{j=0}^{p-1}(dn_j)!\right)|\{(\bmu_1,\bmu_2\cdots, \bmu_n)\in \cU_{d,p}^n: \bmu_1+\bmu_2+\cdots+\bmu_n=(dn_0, dn_{1},\cdots, dn_{p-1})\}|\\
&=\left(\prod_{j=0}^{p-1} (dn_j)!\right)p^{n(d-1)}\bP(X_1+X_2+\cdots+X_n=(dn_0, dn_{1},\cdots, dn_{p-1})),
\end{split}\end{align}
where $X_1,X_2,\cdots, X_n$ are independent copies of $X$, which is uniform distributed over $\cU_{d,p}$.
\end{proposition}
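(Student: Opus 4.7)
The plan is to give a direct combinatorial interpretation of the constraint $A(\cG)\bmv = \bm 0$ in $\bF_p$ in terms of the permutation $\cP$ underlying the configuration model, and then group permutations according to the color profile they induce on each fiber.

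First I would unfold the definitions. Writing $\ell(x)$ for the unique vertex whose fiber contains the point $x$, the entry $A_{k\ell}$ counts $k' \in F_k$ with $\cP(k') \in F_\ell$, so the row-$k$ equation $(A\bmv)_k \equiv 0 \pmod p$ becomes
\begin{equation*}
\sum_{k' \in F_k} v_{\ell(\cP(k'))} \equiv 0 \pmod p.
\end{equation*}
Define the ``coloring'' $w(x) := v_{\ell(x)} \in \bF_p$, so there are exactly $dn_j$ points of color $j$. For each vertex $k$, let $\bmu_k := \Phi\!\left((w(\cP(k')))_{k' \in F_k}\right)$ be the color profile at $k$. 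Then $A(\cG)\bmv = \bm 0$ is equivalent to requiring, for every $k$, that $\bmu_k$ is the profile of some $d$-tuple in $\bF_p^d$ with zero sum, i.e.\ that $\bmu_k$ lies in the support of $\cU_{d,p}$.

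Next I would count the permutations $\cP$ producing a given admissible profile sequence $(\bmu_1,\ldots,\bmu_n)$ by a two-stage construction. In stage one, for each fiber $F_k$ I choose which of its $d$ slots receives each color, subject to the profile being $\bmu_k$; this gives $\binom{d}{(\bmu_k)_0,\ldots,(\bmu_k)_{p-1}}$ arrangements per vertex. In stage two, for each color $j$ I fix a bijection between the $dn_j$ points of color $j$ and the $\sum_k (\bmu_k)_j$ slots designated for color $j$; the sum constraint $\sum_k \bmu_k = (dn_0,\ldots,dn_{p-1})$ ensures these counts match, and there are $(dn_j)!$ such bijections. Every valid $\cP$ is produced exactly once this way. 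Multiplying and summing yields
\begin{equation*}
|\{\cG\in\mathsf{M}_{n,d}: A(\cG)\bmv=\bm 0\}| = \left(\prod_{j=0}^{p-1}(dn_j)!\right)\sum_{(\bmu_1,\ldots,\bmu_n)}\prod_{k=1}^n \binom{d}{(\bmu_k)_0,\ldots,(\bmu_k)_{p-1}},
\end{equation*}
where the outer sum runs over sequences of profiles in the support of $\cU_{d,p}$ with total $(dn_0,\ldots,dn_{p-1})$.

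Finally I would identify the inner sum with $|\{(\bmu_1,\ldots,\bmu_n)\in\cU_{d,p}^n : \bmu_1+\cdots+\bmu_n = (dn_0,\ldots,dn_{p-1})\}|$. Since $\cU_{d,p}$ is a multiset in which a profile $\bmu$ with $\sum_j j\bmu_j \equiv 0 \pmod p$ appears with multiplicity $\binom{d}{\bmu_0,\ldots,\bmu_{p-1}}$ (the number of $\bma\in\bF_p^d$ with $\sum a_i=0$ and $\Phi(\bma)=\bmu$), the multinomial factors are absorbed exactly, giving the first equality of \eqref{e:randomwalk}. The second equality is then immediate from $|\cU_{d,p}|=p^{d-1}$: dividing the count by $p^{n(d-1)}$ converts it into the probability that $n$ i.i.d.\ uniform draws from $\cU_{d,p}$ sum to $(dn_0,\ldots,dn_{p-1})$. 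There is no substantive obstacle here; the only point that deserves care is verifying that the two-stage construction produces each permutation $\cP$ exactly once, which is clear because the profiles $(\bmu_k)$ are determined by $\cP$, and conversely the stage-one and stage-two data together with $(\bmu_k)$ determine $\cP$.
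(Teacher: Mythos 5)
Your proposal is correct and follows essentially the same route as the paper: it records, for each point, the color ($v$-value) of its image under the permutation, uses the zero-sum condition per fiber to land in $\cU_{d,p}$, and counts the permutations consistent with such a coloring via the $\prod_j (dn_j)!$ bijections within color classes. Your intermediate passage through per-vertex profiles with multinomial coefficients is just a re-parametrization of the paper's count of the maps $f_{\cP}$, since those multinomials are exactly the multiplicities with which profiles occur in the multiset $\cU_{d,p}$.
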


\begin{proof}
We recall the configuration model from the introduction that each vertex $k \in \{1,2,\cdots,n\}$ is associated with a fiber $F_k$ of $d$ points.  For each permutation $\cP$ of the $nd$ points, we associate it a map $f_{\cP}: \cup_{k\in \{1,2,\cdots, n\}}F_k\mapsto \bF_p$ in the following way. For any point $k'$, if $\cP(k')=\ell'$ and $\ell'\in F_\ell$, then $f_{\cP}(k')=v_\ell$. A given map $f: \cup_{k\in \{1,2,\cdots, n\}}F_k\mapsto \bF_p$ is from a permutation if
\begin{align}\label{e:permcond}
\sum_{k\in \{1,2,\cdots, n\}}\sum_{k'\in F_k}\bm1(f(k')=j)=dn_j, \quad i=0,1,\cdots, p-1.
\end{align}
If this is the case, the number of permutation $\cP$ such that $f_{\cP}=f$ is given by
\begin{align}\label{e:numpair}
\prod_{j=0}^{p-1}(dn_j)!.
\end{align}
Let $\cG\in \mathsf{M}_{n,d}$ corresponding to a permutation $\cP$. $A(\cG)\bmv=\bm 0$ in $\bF_p$ if and only if for any $k\in \{1,2,\cdots, n\}$
\begin{align}\label{e:sum0}
\sum_{k'\in F_k}f(k')=0.
\end{align}
Especially, $\Phi(k'\in F_k: f(k')) \in \cU_{d,p}$ for any $k\in \{1,2,\cdots, n\}$. The number of maps $f: \cup_{k\in \{1,2,\cdots, n\}}F_k\mapsto \bF_p$ satisfying \eqref{e:permcond} and \eqref{e:sum0} is given by
\begin{align}\label{e:nummap}
|\{(\bmu_1,\bmu_2\cdots, \bmu_n)\in \cU_{d,p}^n: \bmu_1+\bmu_2+\cdots+\bmu_n=(dn_0, dn_{1},\cdots, dn_{p-1})\}|.
\end{align}
The claim \eqref{e:randomwalk} follows from \eqref{e:numpair} and \eqref{e:nummap}.
\end{proof}

\section{Proof of Theorem \ref{thm:smcount}}
Thanks to Proposition \ref{p:walkrep}, we can rewrite the lefthand side of \eqref{e:smcount} as
\begin{align*}
\sum_{\bmv\in \bF_p^n\setminus{\bm0}}|\{\cG\in \mathsf{M}_{n,d}: A(\cG)\bmv=\bm0\}|
&=\sum_{n_0,n_1,\cdots,n_{p-1}\in \bZ_{\geq 0}, n_0<n\atop n_0+n_1+\cdots+n_{p-1}=n}\sum_{\bmv\in \cS(n_0,n_1,\cdots, n_{p-1})} |\{\cG\in \mathsf{M}_{n,d}: A(\cG)\bmv=\bm0\}|\\
&=\sum_{n_0,n_1,\cdots,n_{p-1}\in \bZ_{\geq 0}, n_0<n\atop n_0+n_1+\cdots+n_{p-1}=n}{n\choose n_0,n_1,\cdots, n_{p-1}}\left(\prod_{j=0}^{p-1} (dn_j)!\right)\times\\
&\phantom{{}={}}\times |\{(\bmu_1,\bmu_2\cdots, \bmu_n)\in \cU_{d,p}^n: \bmu_1+\bmu_2+\cdots+\bmu_n=(dn_0, dn_{1},\cdots, dn_{p-1})\}|.
\end{align*}
Therefore Theorem \ref{thm:smcount} is equivalent to the following estiamte
\begin{align}\begin{split}\label{e:keyest}
&\phantom{{}={}}\sum_{n_0,n_1,\cdots,n_{p-1}\in \bZ_{\geq 0}, n_0<n\atop n_0+n_1+\cdots+n_{p-1}=n}{n\choose n_0,n_1,\cdots, n_{p-1}}{dn\choose dn_0,dn_1,\cdots, dn_{p-1}}^{-1}\times\\
&\times |\{(\bmu_1,\bmu_2\cdots, \bmu_n)\in \cU_{d,p}^n: \bmu_1+\bmu_2+\cdots+\bmu_n=(dn_0, dn_{1},\cdots, dn_{p-1})\}|=1+\oo(1).
\end{split}\end{align}

To prove \eqref{e:keyest}, we fix a large number $\fb>0$, and decompose those $p$-tuples $(n_0,n_1,\cdots,n_{p-1})$ into two classes:
\begin{enumerate}
\item (Equidistributed) $\cE$ is the set of $p$-tuples $(n_0,n_1,\cdots,n_{p-1})\in \bZ_{\geq 0}^n$, such that $\sum_{j=0}^{p-1}(n_j/n-1/p)^2\leq \fb\ln n/n$.
\item (Non-equidistributed) $\cN$ is the set of $p$-tuples $(n_0,n_1,\cdots,n_{p-1})\in \bZ_{\geq 0}^n$, which are not $(n,0,0,\cdots, 0)$ or equidistributed.
\end{enumerate}
In Section \ref{subs:lclt}, we estimate the sum of terms in \eqref{e:keyest} corresponding to equidistributed $p$-tuples using a local central limit theorem. In Section \ref{subs:ldp}, we show that the sum of terms in \eqref{e:keyest} corresponding to non-equidistributed $p$-tuples is small, via a large deviation estimate. Theorem \ref{thm:smcount} follows from combining Proposition \ref{p:lclt} and Proposition \ref{p:ldp}.

\subsection{Local central limit theorem estimate}
\label{subs:lclt}

In this section, we estimate the sum of terms in \eqref{e:keyest} corresponding to equidistributed $p$-tuples $(n_0,n_1,\cdots,n_{p-1})$, using a local central limit theorem.
\begin{proposition}\label{p:lclt}
Let $d\geq 3$ be a fixed integer, and a prime number $p$ such that $\gcd(p,d)=1$. Then
\begin{align}\label{e:equ}
\sum_{(n_0,n_1,\cdots,n_{p-1}) \in \cE}\sum_{\bmv\in \cS(n_0,n_1,\cdots, n_{p-1})} |\{\cG\in \mathsf{M}_{n,d}: A(\cG)\bmv=\bm0\}|=\left(1+\OO\left(\frac{(\ln n)^{3/2}}{\sqrt{n}}\right)\right)|\mathsf{M}_{n,d}|.
\end{align}
\end{proposition}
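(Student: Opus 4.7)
The plan is to use Proposition \ref{p:walkrep} to rewrite the left-hand side of \eqref{e:equ}, then combine Stirling's formula for the multinomial coefficients with a local central limit theorem (LCLT) for the walk $S_n=X_1+\cdots+X_n$. After these asymptotic expansions, the sum over $\cE$ turns into a Riemann sum for a Gaussian integral that cancels the prefactor and leaves $1$. Concretely, dividing by $|\mathsf{M}_{n,d}|=(nd)!$ and applying Proposition \ref{p:walkrep}, the quantity to estimate is
\begin{align*}
\sum_{(n_0,\ldots,n_{p-1})\in\cE}\binom{n}{n_0,\ldots,n_{p-1}}\binom{dn}{dn_0,\ldots,dn_{p-1}}^{-1}p^{(d-1)n}\bP\pb{S_n=(dn_0,\ldots,dn_{p-1})}.
\end{align*}
On $\cE$ every $n_j$ is comparable to $n/p$, so Stirling yields, with $q_j=n_j/n$ and $H(q)=-\sum_j q_j\log q_j$,
\begin{align*}
\binom{n}{n_0,\ldots,n_{p-1}}\binom{dn}{dn_0,\ldots,dn_{p-1}}^{-1}=d^{(p-1)/2}e^{-(d-1)nH(q)}(1+\OO(1/n)),
\end{align*}
and Taylor-expanding around $q_j\equiv1/p$ (using $\sum_j(q_j-1/p)=0$) gives $H(q)=\log p-\tfrac{p}{2}\sum_j(q_j-1/p)^2+\OO(\sum_j|q_j-1/p|^3)$; on $\cE$ the cubic remainder contributes $\OO((\log n)^{3/2}/\sqrt n)$ after multiplication by $(d-1)n$, which will be the dominant error.

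Next I would compute the mean $\mu=(d/p,\ldots,d/p)$ and covariance $\Sigma=(d/p^2)(pI-J)$ of $X$: when $d\geq 3$, conditioning on any one $a_i$ leaves the remaining coordinates uniform on $\bF_p$ with the appropriate single affine constraint, so $\cov(X_j,X_k)=-d/p^2$ and $\var(X_j)=d(p-1)/p^2$. Projecting out one coordinate yields a $\bZ^{p-1}$-valued $\tilde X$ with invertible covariance $\tilde\Sigma=(d/p^2)(pI_{p-1}-J_{p-1})$ of determinant $d^{p-1}/p^p$ and inverse $\tilde\Sigma^{-1}=(p/d)(I+\bm{1}\bm{1}^T)$. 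The hypothesis $\gcd(p,d)=1$ is used to verify aperiodicity of $\tilde X$, so the multivariate LCLT applies and gives, uniformly on $\cE$,
\begin{align*}
\bP(S_n=(dn_0,\ldots,dn_{p-1}))=\frac{p^{p/2}}{d^{(p-1)/2}(2\pi n)^{(p-1)/2}}\exp\!\pa{-\frac{pd}{2n}\sum_{j=0}^{p-1}(n_j-n/p)^2}(1+\oo(1)),
\end{align*}
where the quadratic form follows from a short computation using $\tilde\Sigma^{-1}$ and the identity $\sum_{j=0}^{p-2}(n_j-n/p)=-(n_{p-1}-n/p)$.

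Multiplying the three factors, $p^{(d-1)n}$ cancels against the $p^{-(d-1)n}$ arising from $e^{-(d-1)n\log p}$, the $d^{(p-1)/2}$ factors cancel, and the two Gaussian exponents combine: $+\tfrac{(d-1)p}{2n}-\tfrac{dp}{2n}=-\tfrac{p}{2n}$. The summand simplifies to
\begin{align*}
\frac{p^{p/2}}{(2\pi n)^{(p-1)/2}}\exp\!\pa{-\frac{p}{2n}\sum_{j=0}^{p-1}(n_j-n/p)^2}\pb{1+\OO((\log n)^{3/2}/\sqrt n)}.
\end{align*}
Parametrizing by $(n_0,\ldots,n_{p-2})\in\bZ^{p-1}$ with $n_{p-1}=n-\sum_{j<p-1}n_j$, the exponent becomes $-\tfrac{p}{2n}\,m^T(I+\bm{1}\bm{1}^T)m$ on $\bR^{p-1}$. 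Since the matrix $I+\bm{1}\bm{1}^T$ has determinant $p$, the associated Gaussian integral equals $(2\pi n)^{(p-1)/2}/p^{p/2}$, which exactly cancels the prefactor; Poisson summation shows the lattice discretization error is exponentially small, and taking $\fb$ large forces the tail outside $\cE$ to contribute at most $\OO(n^{-p\fb/2})$. Adding all errors, the $(\log n)^{3/2}/\sqrt n$ from the cubic entropy term dominates, yielding \eqref{e:equ}.

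The main obstacles I anticipate are (i) verifying aperiodicity of $\tilde X$ so that the LCLT produces the correct pre-factor (this is precisely where $\gcd(p,d)=1$ is essential), and (ii) the bookkeeping needed to confirm that the cubic Taylor remainder from $H(q)$ dominates the Stirling, LCLT, Riemann-sum, and tail errors, giving exactly the claimed rate $\OO((\log n)^{3/2}/\sqrt n)$.
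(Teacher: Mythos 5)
Your overall architecture (Proposition \ref{p:walkrep}, Stirling, a local CLT, then a Riemann-sum/Gaussian cancellation) matches the paper, and your covariance computation and quadratic forms agree with \eqref{e:meanandcov} and \eqref{e:firstfactor}. But there is a genuine gap at the heart of step (i): the claim that $\gcd(p,d)=1$ makes the projected walk $\tilde X$ aperiodic on $\bZ^{p-1}$, so that the plain multivariate LCLT holds uniformly on $\cE$, is false. Every increment $X=\Phi(a_1,\dots,a_d)$ with $a_1+\cdots+a_d=0$ in $\bF_p$ satisfies the extra congruence $\sum_{j=0}^{p-1} j\,X(j)\equiv 0 \Mod p$, so the differences of support points span only an index-$p$ sublattice of $\{\bmx\in\bZ^p:\sum_j x_j=0\}$ (this is exactly the paper's statement around \eqref{e:line}: $|\phi_{X-\bmmu}|=1$ on $p$ distinct lines, not one). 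Consequently $\bP(S_n=(dn_0,\dots,dn_{p-1}))$ vanishes unless $\sum_j j n_j\equiv 0\Mod p$ (here is where $\gcd(p,d)=1$ actually enters), and at admissible points the correct local probability is $p$ times the density you wrote down. Your formula is therefore wrong at every point of $\cE$: too small by a factor $p$ on the admissible residue class and strictly positive where the true probability is zero.

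Your final answer still comes out right only because these two errors cancel: you sum a formula that undercounts each admissible term by a factor $p$ over a set that is $p$ times too large, and since the Gaussian is slowly varying the totals agree. As written, though, the proof rests on a false intermediate lemma, and the compensation is neither noticed nor justified. The repair is what the paper does: prove the local limit theorem by Fourier inversion on the correct sublattice, picking up the factor $p$ from the $p$ regions $B_j(\delta)$ in \eqref{e:integ1}, and then convert the sum over the admissible residue class into a sum over all of $\cE$ by the averaging step \eqref{e:ave}--\eqref{e:integ5}, keeping track of the covolume $p^{1/2}/n^{p-1}$ of the relevant lattice in the Riemann sum. A secondary point: to get the stated rate $\OO((\ln n)^{3/2}/\sqrt n)$ you also need a quantitative version of the local limit theorem (the paper's explicit Taylor expansion \eqref{e:cf}--\eqref{e:integ4} on the region $\|\bmx\|_2^2\leq \fc\ln n/n$), not just a $1+\oo(1)$ statement.
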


\begin{proof}
Thanks to Proposition \ref{p:walkrep}, we have
\begin{align}\begin{split}\label{e:explclt}
&\phantom{{}={}}\frac{1}{|\mathsf{M}_{n,d}|}\sum_{(n_0,n_1,\cdots,n_{p-1}) \in \cE}\sum_{\bmv\in \cS(n_0,n_1,\cdots, n_{p-1})} |\{\cG\in \mathsf{M}_{n,d}: A(\cG)\bmv=\bm0\}|\\
&=\sum_{(n_0,n_1,\cdots,n_{p-1})\in \cE}{n\choose n_0,n_1,\cdots, n_{p-1}}{dn\choose dn_0,dn_1,\cdots, dn_{p-1}}^{-1}p^{n(d-1)}\\
&\phantom{{}={}}\times\bP(X_1+X_2+\cdots+X_n=(dn_0, dn_{1},\cdots, dn_{p-1})),
\end{split}\end{align}
where $X_1,X_2,\cdots, X_n$ are independent copies of $X$, which is uniform distributed over $\cU_{d,p}$ as defined in \eqref{e:defcU}.
For an equidistributed $p$-tuple $(n_0,n_1,\cdots,n_{p-1})$, we denote $\fn_j=n_j/n$ for $j=0,1,\cdots, p-1$. Then by our definition, we have 
$\sum_{j=0}^{p-1}(\fn_j-1/p)^2\leq \fb\ln n/n$. We estimate the first factor on the righthand side of \eqref{e:explclt} using Stirling's formula,
\begin{align}\begin{split}\label{e:firstfactor}
&\phantom{{}={}}{n\choose n_0,n_1,\cdots, n_{p-1}}{dn\choose dn_0,dn_1,\cdots, dn_{p-1}}^{-1}p^{(d-1)n}\\
&=\left(1+\OO\left(\frac{1}{n}\right)\right)d^{\frac{p-1}{2}}
\exp\left\{(d-1)n\left(\sum_{j=0}^{p-1}\fn_j\ln \fn_j +\ln p\right)\right\}\\
&=\left(1+\OO\left(\frac{(\ln n)^{3/2}}{\sqrt{n}}\right)\right)d^{\frac{p-1}{2}}
\exp\left\{\frac{(d-1)pn}{2}\sum_{j=0}^{p-1}(\fn_j-1/p)^2\right\}.
\end{split}\end{align}

In the following, we estimate $\bP(S_n=(dn_0, dn_{1},\cdots, dn_{p-1}))$, where $S_n=X_1+X_2+\cdots+X_n$.
We recall that $X_1, X_2,\cdots, X_n$ are independent copies of $X$, which is uniformly distributed over the multiset $\cU_{d,p}$. The mean of $X$ is given by
\begin{align}\label{e:mean}
\bE[X(j)]
=\frac{1}{p^{d-1}}\sum_{(a_1,a_2,\cdots,a_d)\in \bF_p^d\atop a_1+a_2+\cdots+a_d=0}\sum_{k=1}^d\bm1(a_k=j)=\frac{d}{p}.
\end{align}
The covariance of $X$ is given by
\begin{align}\begin{split}\label{e:cov}
&\phantom{{}={}}\bE[(X(j)-d/p)(X(j')-d/p)]
=\frac{1}{p^{d-1}}\sum_{(a_1,a_2,\cdots,a_d)\in \bF_p^d\atop a_1+a_2+\cdots+a_d=0}\sum_{1\leq k,k'\leq d}\bm1(a_k=j)\bm1(a_{k'}=j')-\frac{d^2}{p^2}\\
&=\frac{1}{p^{d-1}}\sum_{(a_1,a_2,\cdots,a_d)\in \bF_p^d\atop a_1+a_2+\cdots+a_d=0}\left(\delta_{jj'}\sum_{1\leq k\leq d}\bm1(a_k=j)+\sum_{1\leq k\neq k'\leq d}\bm1(a_k=j)\bm1(a_{k'}=j')\right)-\frac{d^2}{p^2}=\frac{d}{p}\delta_{jj'}-\frac{d}{p^2}.
\end{split}\end{align}
We summarize \eqref{e:mean} and \eqref{e:cov} as
\begin{align}\label{e:meanandcov}
\bmmu\deq \bE[X]=(d/p,d/p,\cdots,d/p),\quad \Sigma\deq\bE[(X-\bmmu)(X-\bmmu)^t]=\frac{d}{p}I_p-\frac{d}{p^2}\bm1\bm1^t.
\end{align}
We denote the characteristic function of $X$ as
\begin{align*}
\phi_{X}(\bmt)=\bE[\exp\{\ri\langle\bmt, X\rangle\}],\quad \phi_{X-\bmmu}(\bmt)=\bE[\exp\{\ri\langle\bmt, X-\bmmu\rangle\}]=e^{-\ri \langle \bmt,\bmmu\rangle}\phi_{X}(\bmt).
\end{align*}

For a $p$-tuple $(n_0,n_1,\cdots,n_{p-1})$, if $\sum_{j=0}^{p-1}jn_j\not\equiv 0 \Mod p$, then $\bP(S_n=(dn_0,dn_1,\cdots, dn_{p-1}))=0$. We only need to consider $p$-tuples $(n_0,n_1,\cdots,n_{p-1})$ such that $\sum_{j=0}^{p-1}jn_j\equiv 0 \Mod p$. We denote $\bmn=(n_0,n_1,\cdots, n_{p-1})$. By inverse Fourier formula
\begin{align*}\begin{split}
\bP(S_n=d\bmn)
&=\frac{1}{(2\pi)^p}\int_{2\pi \bR^p/\bZ^p}\phi_X^n(\bmt)e^{-\ri \langle \bmt, d\bmn\rangle}\rd \bmt\\
&=\frac{1}{(2\pi)^p}\int_{2\pi \bR^p/\bZ^p}\phi_{X-\bmmu}^n(\bmt)e^{-\ri \langle \bmt, d\bmn-n\bmmu\rangle}\rd \bmt.
\end{split}\end{align*}
The lattice spanned by vectors in $\cU_{d,p}$ is the dual lattice of $\spn \{(0,1/p, 2/p,\cdots, p-1/p), \bme_1,\bme_2,\cdots, \bme_{p}\}$
in $\{(x_1, x_2,\cdots, x_p)\in \bR^p: x_1+x_2+\cdots+x_p=d\}$, where $\bme_1,\bme_2,\cdots, \bme_{p}$ is the standard base of $\bR^p$. Therefore $|\phi_{X-\bmmu}^n(\bmt)|=1$ if any only if 
\begin{align}\label{e:line}
\bmt\in2\pi(0,1/p,2/p,\cdots, p-1/p)\bZ+(1,1,\cdots,1)\bR.
\end{align}
For $\bmt$ which are away from those lines in \eqref{e:line}, the characteristic function $\phi^n_{X-\bmmu}(\bmt)$ is exponentially small. 
We define domains 
\begin{align*}
B_j(\delta)=2\pi j(0,1/p,2/p,\cdots, (p-1)/p)+Q (\{\bmx\in \bR^{p-1}:\|\bmx\|_2^2\leq \delta\}\times [0,2\sqrt{p}\pi]), \quad j=0,1,2,\cdots, p-1,
\end{align*}
where $Q$ is a $p\times p$ orthogonal matrix  $Q=[O, \bm1/\sqrt{p}]$.
From the discussion above, we get
\begin{align}\begin{split}\label{e:integ1}
\bP(S_n=d\bmn)
&=\frac{1}{(2\pi)^p}\sum_{j=0}^{p-1}\int_{2\pi B_j(\delta)}\phi_{X-\bmmu}^n(\bmt)e^{-\ri \langle \bmt, d\bmn-n\bmmu\rangle}\rd \bmt + e^{-c(\delta)n}\\
&=\frac{p}{(2\pi)^p}\int_{2\pi B_0(\delta)}\phi_{X-\bmmu}^n(\bmt)e^{-\ri \langle \bmt, d\bmn-n\bmmu\rangle}\rd \bmt + e^{-c(\delta)n},
\end{split}\end{align}
where we used the fact that the integrand is translation invariant by vectors $2\pi(0,1/p, 2/p,\cdots, p-1/p)\bZ$.
For any $\bmt\in  B_0(\delta)$, by definition there exists $\bmx\in \bR^{p-1}$ with $\|\bmx\|^2_2\leq \delta$ and $y\in [0,2\sqrt{p}\pi]$, such that $\bmt=Q(\bmx,y)=O\bmx+(y/\sqrt{p})\bm1$. By a change of variable, we can rewrite \eqref{e:integ1} as
\begin{align}\begin{split}\label{e:integ2}
&\phantom{{}={}}\frac{p}{(2\pi)^p}\int_{2\pi B_0(\delta)}\phi_{X-\bmmu}^n(\bmt)e^{-\ri \langle \bmt, d\bmn-n\bmmu\rangle}\rd \bmt\\
&=\frac{p}{(2\pi)^{p}}\int_{\{\bmx\in \bR^{p-1}:\|\bmx\|^2_2\leq \delta\}\times [0,2\sqrt{p}\pi]}\phi_{X-\bmmu}^n(Q(\bmx,y))e^{-\ri \langle Q(\bmx,y), d\bmn-n\bmmu\rangle}\rd \bmx\rd y\\
&=\frac{p^{3/2}}{(2\pi)^{p-1}}\int_{\{\bmx\in \bR^{p-1}:\|\bmx\|^2_2\leq \delta\}}\phi_{X-\bmmu}^n(O\bmx)e^{-\ri \langle O\bmx, d\bmn-n\bmmu\rangle}\rd \bmx,
\end{split}\end{align}
where we used that $\langle \bm1, X-\bmmu\rangle=0$ and $\langle\bm1, d\bmn-n\bmmu\rangle=0$. By Taylor expansion, the characteristic function is
\begin{align}\begin{split}\label{e:cf}
\phi_{X-\bmmu}(O\bmx)
&=\bE\left[1+\ri\langle O\bmx, X-\bmmu\rangle-\frac{1}{2}\langle O\bmx, X-\bmmu\rangle^2+\OO(\|\bmx\|_2^3)\right]\\
&=1-\frac{1}{2}\bmx^tO^t\Sigma O\bmx+\OO(\|\bmx\|_2^3)
=1-\frac{d}{2p}\|\bmx\|_2^2+\OO(\|\bmx\|_2^3),
\end{split}\end{align}
where we used $\Sigma=dI_p/p-d\bm1\bm1^t/p^2$ from \eqref{e:meanandcov}, and $O^t\Sigma O=d I_{p-1}/p$. Fix a large constant $\fc$, which will be chosen later. 
For $\fc\ln n/n\leq \|\bmx\|_2^2\leq \delta$, we have
\begin{align}\label{e:integ3}
|\phi_{X-\bmmu}(O\bmx)|^n\leq \exp\left\{-\left(\frac{\fc d}{2p}+\oo(1)\right)\ln n\right\},
\end{align}
which turns out to be negligible provided $\fc$ is large enough. In the following we will restrict the integral \eqref{e:integ2} on the domain 
$\{\bmx\in \bR^{p-1}:\|\bmx\|^2_2\leq \fc\ln n/n\}$. From \eqref{e:cf}, on the domain $\{\bmx\in \bR^{p-1}:\|\bmx\|^2_2\leq \fc\ln n/n\}$, we have 
\begin{align*}
\phi_{X-\bmmu}^n(O\bmx)=\left(1+\OO\left(\frac{(\ln n)^{3/2}}{n^{1/2}}\right)\right)e^{-\frac{dn}{2p}\|\bmx\|^2_2},
\end{align*}
and
\begin{align}\begin{split}\label{e:integ4}
&\phantom{{}={}}\frac{p^{3/2}}{(2\pi)^{p-1}}\int_{\{\bmx\in \bR^{p-1}:\|\bmx\|^2_2\leq \fc\ln n/n\}}\phi_{X-\bmmu}^n(O\bmx)e^{-\ri \langle O\bmx, d\bmn-n\bmmu\rangle}\rd \bmx\\
&=\left(1+\OO\left(\frac{(\ln n)^{3/2}}{n^{1/2}}\right)\right)\frac{p^{3/2}}{(2\pi)^{p-1}}\int_{\{\bmx\in \bR^{p-1}:\|\bmx\|^2_2\leq \fc\ln n/n\}}e^{-\frac{dn}{2p}\|\bmx\|^2_2}e^{-\ri \langle \bmx, O^t(d\bmn-n\bmmu)\rangle}\rd \bmx\\
&=\left(1+\OO\left(\frac{(\ln n)^{3/2}}{n^{1/2}}\right)\right)\frac{p^{3/2}}{(2\pi)^{p-1}}\int_{ \bR^{p-1}}e^{-\frac{dn}{2p}\|\bmx\|^2_2}e^{-\ri \langle \bmx, O^t(d\bmn-n\bmmu)\rangle}\rd \bmx+e^{-\left(\frac{\fc d}{2p}+\oo(1)\right)\ln n}\\
&=\left(1+\OO\left(\frac{(\ln n)^{3/2}}{n^{1/2}}\right)\right)\frac{p^{3/2}}{(2\pi)^{p-1}}\int_{ \bR^{p-1}}e^{-\frac{dn}{2p}\|\bmx\|^2_2}e^{-\ri \langle \bmx, O^t(d\bmn-n\bmmu)\rangle}\rd \bmx+e^{-\left(\frac{\fc d}{2p}+\oo(1)\right)\ln n}\\
&=\left(1+\OO\left(\frac{(\ln n)^{3/2}}{n^{1/2}}\right)\right)p^{3/2}
\left(\frac{p}{2\pi d n}\right)^{\frac{p-1}{2}}e^{-\frac{dpn}{2}\left\|O^t \left(\frac{\bmn}{n}-\frac{\bmmu}{d}\right)\right\|_2^2}
+e^{-\left(\frac{\fc d}{2p}+\oo(1)\right)\ln n}.
\end{split}\end{align}
The exponents in \eqref{e:firstfactor} and \eqref{e:integ4} cancel
\begin{align*}
-\frac{dpn}{2}\left\|O^t \left(\frac{\bmn}{n}-\frac{\bmmu}{d}\right)\right\|_2^2+\frac{(d-1)pn}{2}\sum_{j=0}^{p-1}(\fn_j-1/p)^2
=-\frac{pn}{2}\left\|O^t \left(\frac{\bmn}{n}-\frac{\bmmu}{d}\right)\right\|_2^2,
\end{align*}
where we used that $O^t$ is an isometry from $\{(x_1,x_2,\cdots, x_{p})\in \bR^p: x_1+x_2+\cdots+x_p=0\}$ to $\bR^{p-1}$.
Therefore, by combining the estimates \eqref{e:firstfactor}, \eqref{e:integ1}, \eqref{e:integ2}, \eqref{e:integ3} and \eqref{e:integ4}, we conclude that for any $p$-tuple $(n_0,n_1,\cdots,n_{p-1})\in \cE$, with $\sum_{j=0}^{p-1}jn_j\equiv 0 \Mod p$,
\begin{align}\begin{split}\label{e:integ4.5}
&\phantom{{}={}}\frac{1}{|\mathsf{M}_{d,p}|}\sum_{\bmv\in \cS(n_0,n_1,\cdots, n_{p-1})} |\{\cG\in \mathsf{M}_{n,d}: A(\cG)\bmv=\bm0\}|\\
&=
\left(1+\OO\left(\frac{(\ln n)^{3/2}}{n^{1/2}}\right)\right)
p^{3/2}
\left(\frac{p}{2\pi n}\right)^{\frac{p-1}{2}}e^{-\frac{pn}{2}\left\|O^t \left(\frac{\bmn}{n}-\frac{\bmmu}{d}\right)\right\|_2^2}+e^{-\left(\frac{\fc d}{2p}-\frac{(d-1)\fb p}{2}+\oo(1)\right)\ln n}.
\end{split}\end{align}
For the second term on the righthand side of \eqref{e:integ4.5}, since the total number of $p$-tuples $(n_0,n_1,\cdots, n_{p-1})\in \cE$ is bounded by $e^{(1+\oo(1))p\ln n/2}$,
\begin{align}\label{e:secondterm}
\sum_{(n_0,n_1,\cdots,n_{p-1}) \in \cE}e^{-\left(\frac{\fc d}{2p}-\frac{(d-1)\fb p}{2}+\oo(1)\right)\ln n}
=e^{-\left(\frac{\fc d}{2p}-\frac{(\fb d-\fb+1) p}{2}+\oo(1)\right)\ln n},
\end{align}
which is negligible provided $\fc$ is large enough.

For the term on the righthand side of \eqref{e:integ4.5} corresponding to the $p$-tuple $(n_0,n_1,\cdots,n_{p-1})\in \cE$, with $\sum_{j=0}^{p-1}jn_j\equiv 0 \Mod p$, we can replace it by an average.
\begin{align}\begin{split}\label{e:ave}
pe^{-\frac{pn}{2}\left\|O^t \left(\frac{\bmn}{n}-\frac{\bmmu}{d}\right)\right\|_2^2}
&=e^{-\frac{pn}{2}\left\|O^t \left(\frac{\bmn}{n}-\frac{\bmmu}{d}\right)\right\|_2^2}
+\left(1+\OO\left(\frac{(\ln n)^{1/2}}{n^{1/2}}\right)\right)
\sum_{j=1}^{p-1}e^{-\frac{pn}{2}\left\|O^t \left(\frac{\bmn+\bme_j-\bme_0}{n}-\frac{\bmmu}{d}\right)\right\|_2^2}.
\end{split}\end{align}
Therefore, we can replace the sum over $p$-tuples $(n_0,n_1,\cdots,n_{p-1})\in \cE$, with $\sum_{j=0}^{p-1}jn_j\equiv 0 \Mod p$ to the sum over all $p$-tuples $(n_0,n_1,\cdots,n_{p-1})\in \cE$ with a factor $1/p$. 
\begin{align}\begin{split}\label{e:integ5}
&\phantom{{}={}}\sum_{(n_0,n_1,\cdots,n_{p-1})\in \cE\atop \sum_{j=0}^{p-1}jn_j\equiv 0\Mod p}
\left(1+\OO\left(\frac{(\ln n)^{3/2}}{n^{1/2}}\right)\right)
p^{3/2}
\left(\frac{p}{2\pi n}\right)^{\frac{p-1}{2}}e^{-\frac{pn}{2}\left\|O^t \left(\frac{\bmn}{n}-\frac{\bmmu}{d}\right)\right\|_2^2}\\
&=\sum_{(n_0,n_1,\cdots,n_{p-1})\in \cE}
\left(1+\OO\left(\frac{(\ln n)^{3/2}}{n^{1/2}}\right)\right)
p^{1/2}
\left(\frac{p}{2\pi n}\right)^{\frac{p-1}{2}}e^{-\frac{pn}{2}\left\|O^t \left(\frac{\bmn}{n}-\frac{\bmmu}{d}\right)\right\|_2^2}.
\end{split}\end{align}
In the following we estimate the sum in \eqref{e:integ5}. The set of points $O^t(\bmn/n-\bm\mu/d)$ for $\bmn=(n_0,n_1,\cdots, n_{p-1})\in \cE$ is a subset of a lattice in $\bR^{p-1}$. A set of base for this lattice is given by
\begin{align*}
O^t(e_{j}-e_{0})/n, \quad 0\leq j\leq p-1.
\end{align*}
The volume of the fundamental domain is $p^{1/2}/n^{p-1}$. By viewing \eqref{e:integ5} as a Riemann sum, we can rewrite it as an integral on $\bR^{p-1}$.
\begin{align}\begin{split}\label{e:integ6}
&\phantom{{}={}}\sum_{(n_0,n_1,\cdots,n_{p-1})\in \cE}
\left(1+\OO\left(\frac{(\ln n)^{3/2}}{n^{1/2}}\right)\right)
p^{1/2}
\left(\frac{p}{2\pi n}\right)^{\frac{p-1}{2}}e^{-\frac{pn}{2}\left\|O^t \left(\frac{\bmn}{n}-\frac{\bmmu}{d}\right)\right\|_2^2}\\
&=\left(1+\OO\left(\frac{(\ln n)^{3/2}}{n^{1/2}}\right)\right)
\left(\frac{pn}{2\pi}\right)^{\frac{p-1}{2}}\int_{\{\bmx\in\bR^{p-1}:\|\bmx\|_2^2\leq \fb\ln n/n\}} e^{-\frac{pn}{2}\|\bmx\|^2}\rd \bmx\\
&=\left(1+\OO\left(\frac{(\ln n)^{3/2}}{n^{1/2}}\right)\right),
\end{split}\end{align}
provided $\fb$ is large enough. The claim \eqref{e:equ} follows from combining \eqref{e:integ4.5}, \eqref{e:secondterm} and \eqref{e:integ6}.
%
%
This finishes the proof of Proposition \ref{p:lclt}.
\end{proof}

\subsection{Large deviation estimate}
\label{subs:ldp}

In this section, we show that the sum of terms in \eqref{e:keyest} corresponding to non-equidistributed $p$-tuples $(n_0,n_1,\cdots,n_{p-1})$ is small. 
\begin{proposition}\label{p:ldp}
Let $d\geq 3$ be a fixed integer, and a prime number $p$ such that $\gcd(p,d)=1$. Then
\begin{align}\label{e:noneq}
\frac{1}{|\mathsf{M}_{n,d}|}\sum_{(n_0,n_1,\cdots,n_{p-1}) \in \cN}\sum_{\bmv\in \cS(n_0,n_1,\cdots, n_{p-1})} |\{\cG\in \mathsf{M}_{n,d}: A(\cG)\bmv=\bm0\}|\leq \frac{\OO(1)}{n^{(d-2)}}.
\end{align}
\end{proposition}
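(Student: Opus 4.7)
The plan is to carry out the large-deviation estimate promised by the author. By Proposition \ref{p:walkrep}, the left-hand side of \eqref{e:noneq} equals
\begin{align*}
\sum_{\bmn \in \cN}\binom{n}{n_0,\ldots,n_{p-1}}\binom{dn}{dn_0,\ldots,dn_{p-1}}^{-1}p^{(d-1)n}\bP(S_n = d\bmn),
\end{align*}
so my strategy is to bound the multinomial prefactor and the probability separately in terms of the Kullback--Leibler divergence $D(\bm\fn) := \ln p + \sum_j \fn_j \ln \fn_j$ of $\bm\fn = \bmn/n$ from uniform, and then sum over $\cN$.

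First, Stirling (as in \eqref{e:firstfactor}, but uniform in $\bm\fn$, with vanishing coordinates $n_j = 0$ handled by factoring them out of the expansion) gives
\begin{align*}
\binom{n}{n_0,\ldots,n_{p-1}}\binom{dn}{dn_0,\ldots,dn_{p-1}}^{-1}p^{(d-1)n} \leq \OO(n^{p-1})\,\exp\bigl((d-1)n\,D(\bm\fn)\bigr).
\end{align*}
Second, to bound $\bP(S_n = d\bmn)$, I represent the walks of Proposition \ref{p:walkrep} as $nd$ i.i.d.\ uniform $\bF_p$-letters conditioned on each of the $n$ length-$d$ blocks summing to $0\Mod{p}$. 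Writing $T$ for the type of the unconditioned letters, Bayes' formula gives $\bP(S_n = d\bmn) = p^n\,\bP(T = d\bmn)\,\bP(\text{blocks OK}\mid T = d\bmn)$, since the unconditional probability of all blocks summing to $0$ is $p^{-n}$. The multinomial bound is $\bP(T = d\bmn) \leq \OO(n^{p-1})\exp(-dn\,D(\bm\fn))$, and the conditional block-sum probability is controlled via the Fourier identity $\1[\text{row sum} \equiv 0] = p^{-1}\sum_m \omega^{m\cdot \text{row sum}}$: isolating the diagonal contribution $\bm m = m\bm 1$, which contributes $p^{-(n-1)}$ exactly when the congruence $\sum_j j n_j \equiv 0 \Mod{p}$ holds (and the full sum vanishes otherwise), and bounding the oscillatory off-diagonal terms, one obtains
\begin{align*}
\bP(S_n = d\bmn) \leq \OO(n^{p-1})\,\exp\bigl(-dn\,D(\bm\fn)\bigr).
\end{align*}

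Multiplying the two bounds cancels the bulk of the $D$-dependence and leaves $\mathrm{term}(\bmn) \leq \OO(n^{2(p-1)})\,\exp(-n\,D(\bm\fn))$. Taylor expansion at the uniform point gives $D(\bm\fn) \geq (p/2 - \oo(1))\sum_j(\fn_j - 1/p)^2$, so for $\bmn \in \cN$ satisfying $\sum_j(\fn_j - 1/p)^2 \geq \fb\ln n/n$ we have $\mathrm{term}(\bmn) \leq \OO(n^{2(p-1) - \fb p/2})$. Partitioning $\cN$ into dyadic shells in the deviation $\sum_j(\fn_j - 1/p)^2 \in [r, 2r]$, and summing over the at most $\OO(n^{p-1} r^{(p-1)/2})$ lattice points per shell, the total is dominated by the innermost shell $r \sim \fb\ln n/n$ and bounded by $(\ln n)^{(p-1)/2} n^{A - \fb p/2}$ for a constant $A = A(d,p)$; choosing $\fb$ large enough yields the claimed $\OO(n^{-(d-2)})$.

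The main obstacle will be Step 2 for tuples $\bmn$ strongly concentrated near a single coordinate (corresponding to $\bmv$ close to a constant vector): there, $\bP(\text{blocks OK}\mid T = d\bmn)$ does not decay like $p^{-(n-1)}$, and the Chernoff bound becomes loose. In this regime, however, the structural observation $A\bm 1 = d\bm 1 \not\equiv \bm 0 \Mod{p}$ (using $\gcd(d,p)=1$) forces $A\bmv = 0$ for $\bmv = c\bm 1 + \tilde\bmv$ with small-support $\tilde\bmv$ to impose a rare combinatorial constraint on the graph, akin to the $K_{2,d}$-obstruction in the remark following Theorem \ref{thm:spR}. Direct enumeration of such configurations shows their total contribution is $\OO(n^{-(d-2)})$, which matches the sharp exponent in the target bound and is the sharpness-limiting term.
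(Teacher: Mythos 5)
Your overall architecture (prefactor times walk probability, controlled by the KL divergence $D(\bm\fn)=\ln p+\sum_j\fn_j\ln\fn_j$, then a sum over shells) is reasonable, and in the genuinely spread-out regime your Bayes-plus-Fourier route is a workable alternative to the paper's Lagrange-multiplier/AM--GM analysis of the rate function. But the central estimate you rely on, $\bP(S_n=d\bmn)\leq \OO(n^{p-1})\exp(-dn\,D(\bm\fn))$, is false on a large part of $\cN$, not just at the extreme point you flag. Whenever $n_0=n-m$ with $m=o(n)$ (i.e.\ $\bmv$ of small support), the conditional probability $\bP(\text{blocks OK}\mid T=d\bmn)$ is not of order $p^{-n}$: the $dm$ nonzero letters occupy at most $dm$ of the $n$ blocks, so this probability is at worst of size roughly $n^{-\OO(m)}$, which exceeds $p^{-n}$ by a factor like $p^{n-\OO(m)}$. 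A concrete check: for $\bmn=(n-2,1,0,\dots,0,1)$ the true value is $\bP(S_n=d\bmn)\asymp n^{d}p^{-(d-1)n}$, while your bound would give $\OO(n^{p-1+2d})p^{-dn}$, smaller by essentially $p^{n}$. So the diagonal term $\bm m=m\bm 1$ in your character-sum expansion does not dominate in this regime, and the "oscillatory off-diagonal" terms cannot be discarded.

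Your proposed patch covers only part of the hole. Direct enumeration of small-support null vectors (the analogue of the paper's case $m\leq \fb\ln n$, which indeed produces the sharp $\OO(n^{-(d-2)})$) works for $m=\OO(\ln n)$, but the crude count of walks of the form $(p^{d-1}n)^{dm/2}$ blows up once $m$ is a small constant multiple of $n$, so it does not extend to the intermediate range $\fb\ln n\lesssim m\leq \delta n$. That range is exactly where the paper needs a separate argument: a perturbative expansion of the entropy functional around the concentrated maximizer, using the structural facts $\bmw_j(0)\leq d-2$ and $\sum_{k\geq 1}\bmw_j(k)\geq 2$ for $j\geq 2$ together with Jensen's inequality, yielding a gain of order $m\ln(n/m)$ in the exponent. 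Nothing in your proposal supplies this, so the regime $\fb\ln n\lesssim m\leq\delta n$ is left uncontrolled. A smaller point: tuples "concentrated near a single coordinate $j\neq 0$" (i.e.\ $\bmv$ near a nonzero constant vector) are not problematic at all, since $\gcd(d,p)=1$ forces every block to contain at least one letter different from $j$, so the walk count vanishes unless $m\geq n/d$; the genuine difficulty is only the small-support case $j=0$, and your appeal to $A\bm 1=d\bm 1\not\equiv\bm 0$ conflates these two situations.
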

Thanks to Proposition \ref{p:walkrep}, we have
\begin{align}\begin{split}\label{e:expldp}
&\phantom{{}={}}\frac{1}{|\mathsf{M}_{n,d}|}\sum_{(n_0,n_1,\cdots,n_{p-1}) \in \cN}\sum_{\bmv\in \cS(n_0,n_1,\cdots, n_{p-1})} |\{\cG\in \mathsf{M}_{n,d}: A(\cG)\bmv=\bm0\}|\\
&=\sum_{(n_0,n_1,\cdots,n_{p-1})\in \cN}{n\choose n_0,n_1,\cdots, n_{p-1}}{dn\choose dn_0,dn_1,\cdots, dn_{p-1}}^{-1}\\
&\phantom{{}={}}\times |\{(\bmu_1,\bmu_2\cdots, \bmu_n)\in \cU_{d,p}^n: \bmu_1+\bmu_2+\cdots+\bmu_n=(dn_0, dn_{1},\cdots, dn_{p-1})\}|
\end{split}\end{align}
where the multiset $\cU_{d,p}$ is defined in \eqref{e:defcU}. We enumerate the elements of $\cU_{d,p}$ as
\begin{align*}
\cU_{d,p}=\{\bmw_1, \bmw_2,\cdots, \bmw_{p^{d-1}}\}, \quad \bmw_1=(d,0,0,\cdots,0).
\end{align*}
for $2\leq j\leq p^{d-1}$ we have that $\bmw_j(0)\leq d-2$ and $\bmw_j(1)+\bmw_j(2)+\cdots \bmw_j(p-1)\geq 2$. For any non-equidistributed $p$-tuple $(n_0,n_1,\cdots,n_{p-1})$, we denote $\fn_j=n_j/n$ for $j=0,1,\cdots, p-1$. We estimate the first factor on the righthand side of \eqref{e:expldp} using Stirling's formula,
\begin{align}\begin{split}\label{e:firstfactor2}
{n\choose n_0,n_1,\cdots, n_{p-1}}{dn\choose dn_0,dn_1,\cdots, dn_{p-1}}^{-1}
\leq e^{\OO(\ln n)}\exp\{(d-1)n\sum_{j=0}^{p-1} \fn_j\ln \fn_j\}.
\end{split}\end{align}
For the number of walk paths in \eqref{e:expldp}, we have the following bound
\begin{align}\begin{split}\label{e:path}
&\phantom{{}={}}|\{(\bmu_1,\bmu_2\cdots, \bmu_n)\in \cU_{d,p}^n: \bmu_1+\bmu_2+\cdots+\bmu_n=(dn_0, dn_{1},\cdots, dn_{p-1})\}|\\
&=\sum_{a_1+a_2+\cdots+a_{p^{d-1}}=n,\atop a_1\bmw_1+a_2\bmw_2+\cdots a_{p^{d-1}}\bmw_{p^{d-1}}=d(n_0,n_1,\cdots,n_{p-1})} {n\choose a_1,a_2,\cdots,a_{p^{d-1}}}\\
&\leq e^{\OO(\ln n)}\sup_{a_1\bmw_1+a_2\bmw_2+\cdots a_{p^{d-1}}\bmw_{p^{d-1}}=d(n_0,n_1,\cdots,n_{p-1})} {n\choose a_1,a_2,\cdots,a_{p^{d-1}}}\\
&\leq e^{\OO(\ln n)}\sup_{\al_1\bmw_1+\al_2\bmw_2+\cdots \al_{p^{d-1}}\bmw_{p^{d-1}}=d(\fn_0,\fn_1,\cdots,\fn_{p-1})} \exp\left\{-n\sum_{j=1}^{p^{d-1}}\alpha_j\ln \alpha_j\right\}.
\end{split}\end{align}
From estimates \eqref{e:firstfactor} and \eqref{e:path}, the term in \eqref{e:expldp} corresponding to the $p$-tuple $(n_0,n_1,\cdots,n_{p-1})$ is exponentially small, if the rate function 
\begin{align}\label{e:rate}
\sup_{\al_1\bmw_1+\al_2\bmw_2+\cdots \al_{p^{d-1}}\bmw_{p^{d-1}}=d(\fn_0,\fn_1,\cdots,\fn_{p-1})} -\sum_{j=1}^{p^{d-1}}\alpha_j\ln \alpha_j+(d-1)\sum_{j=0}^{p-1}\fn_j\ln \fn_j,
\end{align}
is negative. The following proposition states that the rate function \eqref{e:rate} is negative except for two points.
\begin{proposition}\label{p:ldpbound}
Let $d\geq 3$ be a fixed integer, and a prime number $p$ such that $\gcd(p,d)=1$. For any $\alpha_1,\alpha_2,\cdots,\alpha_{p^{d-1}}\geq 0$ such that 
\begin{align}\label{e:constraint}
\al_1\bmw_1+\al_2\bmw_2+\cdots \al_{p^{d-1}}\bmw_{p^{d-1}}=d(\fn_0,\fn_1,\cdots,\fn_{p-1}),
\end{align}
we have
\begin{align}\label{e:neg}
-\sum_{j=1}^{p^{d-1}}\alpha_j\ln \alpha_j+(d-1)\sum_{j=0}^{p-1}\fn_j\ln \fn_j\leq 0.
\end{align}
Equality holds in the following two points:
\begin{enumerate}\label{e:tomax}
\item $\alpha_1=\alpha_2=\cdots=\alpha_{p^{d-1}}=1/p^{(d-1)}$, $\fn_0=\fn_1=\cdots=\fn_{p-1}=1/p$.
\item $\alpha_1=1$, $\alpha_2=\cdots=\alpha_{p^{d-1}}=0$, $\fn_0=1$, $\fn_1=\cdots=\fn_{p-1}=0$.
\end{enumerate}
\end{proposition}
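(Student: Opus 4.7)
The plan is to recast \eqref{e:neg} as an entropy inequality and attack it by symmetrization followed by subadditivity. Summing \eqref{e:constraint} coordinate-wise gives $d\sum_j \alpha_j = d\sum_i \fn_i = d$, so $\alpha=(\alpha_j)$ is a probability measure on the $p^{d-1}$ ordered tuples $\bma=(a_1,\ldots,a_d)\in \bF_p^d$ with $a_1+\cdots+a_d=0$ (each $\bmw_j$ being $\Phi$ of the $j$-th such tuple in the enumeration). Letting $(A_1,\ldots,A_d)$ be a random such tuple with law $\alpha$, the constraint \eqref{e:constraint} is exactly the statement $\frac{1}{d}\sum_{k=1}^d \bP(A_k=i)=\fn_i$ for every $i\in \bF_p$. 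Writing $H(\cdot)$ for Shannon entropy (in nats), \eqref{e:neg} is equivalent to
\[
H(\alpha)\;\leq\;(d-1)\,H(\fn).
\]

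First I would symmetrize: set $\alpha'(\bma)\deq \tfrac{1}{d!}\sum_{\pi\in S_d}\alpha(\bma\circ\pi)$. Since $\alpha'$ is a convex combination of push-forwards of $\alpha$ under the entropy-preserving permutation bijections, $H(\alpha')\geq H(\alpha)$ by concavity of entropy, and symmetrization preserves both the constraint $a_1+\cdots+a_d=0$ and the averaged marginal $\fn$. Exchangeability then implies that every coordinate marginal of $\alpha'$ equals $\fn$ exactly, and because $A_d=-(A_1+\cdots+A_{d-1})$ in $\bF_p$ is determined by the first $d-1$ coordinates,
\[
H(\alpha)\;\leq\; H(\alpha')\;=\;H(A_1,\ldots,A_{d-1})\;\leq\;\sum_{k=1}^{d-1}H(A_k)\;=\;(d-1)H(\fn)
\]
by subadditivity, which proves \eqref{e:neg}.

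For the equality cases, tightness in both inequalities forces $\alpha$ to already be symmetric and $A_1,\ldots,A_{d-1}$ to be independent under $\alpha$, so they are i.i.d.\ with law $\fn$ and by exchangeability $A_d=-(A_1+\cdots+A_{d-1})$ must also have marginal $\fn$. Taking Fourier transforms on $\bF_p$, this reads $\hat\fn(\xi)^{d-1}=\overline{\hat\fn(\xi)}$ for every $\xi\in\bF_p$. For $\xi\neq 0$, either $\hat\fn(\xi)=0$ or $|\hat\fn(\xi)|^{d-2}=1$; since $d\geq 3$ and any nontrivial additive character of $\bF_p$ is injective, the only way to achieve $|\hat\fn(\xi)|=1$ is for $\fn$ to be a single point mass $\delta_c$, in which case the equation becomes $dc\xi\equiv 0 \pmod p$ for all $\xi$ and $\gcd(p,d)=1$ forces $c=0$. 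If instead $\hat\fn(\xi)=0$ for all $\xi\neq 0$, Fourier inversion yields $\fn$ uniform. These are precisely the two claimed equality points; the symmetric $\alpha$ with i.i.d.\ first $d-1$ coordinates is then uniquely determined to be the distribution listed in each case.

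The main obstacle is the equality analysis. That is the step where the coprimality hypothesis $\gcd(p,d)=1$ is essential, to rule out a spurious point mass at some nonzero $c$ with $dc\equiv 0\pmod p$; it also relies on the Fourier-analytic rigidity of the relation $\fn^{*(d-1)}(-\,\cdot\,)=\fn$ on the prime cyclic group $\bF_p$, where $\bF_p$ having no proper nontrivial subgroups is exactly what lets the dichotomy $\{\hat\fn(\xi)=0\}$ versus $\{|\hat\fn(\xi)|=1\}$ propagate to a statement about $\fn$ itself. The symmetrization-and-subadditivity upper bound is, by contrast, short and routine once the correct entropy reformulation is in hand.
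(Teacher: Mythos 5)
Your proof is correct, and it takes a genuinely different route from the paper. The paper works directly with the functional in \eqref{e:neg}: it treats \eqref{e:constraint} as defining $(\fn_0,\dots,\fn_{p-1})$ in terms of the $\alpha_j$'s, characterizes critical points by Lagrange multipliers, shows the value at a critical point equals $-(d-2+\la)$, and proves $d-2+\la\geq 0$ by an AM--GM estimate (see \eqref{e:am-gm}); the equality cases then come from equality in AM--GM, which yields the condition \eqref{e:equalitycond}, solved using $\gcd(p,d)=1$. You instead reinterpret $(\alpha_j)$ as a probability law on ordered solutions of $a_1+\cdots+a_d=0$ in $\bF_p^d$ and recast \eqref{e:neg} as $H(\alpha)\leq(d-1)H(\fn)$, which you prove by symmetrization (concavity of entropy), the chain rule ($A_d$ is determined by $A_1,\dots,A_{d-1}$), and subadditivity; your equality analysis correctly extracts exchangeability plus independence of the first $d-1$ coordinates, and the Fourier rigidity argument on $\bF_p$ (the dichotomy $\hat{\fn}(\xi)=0$ versus $|\hat{\fn}(\xi)|=1$, with $\gcd(p,d)=1$ killing a point mass at $c\neq 0$) lands exactly on the two claimed equality points, playing the same role that \eqref{e:equalitycond} plays in the paper. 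Your approach buys a proof of the global inequality without any calculus of variations — in particular it sidesteps the issues the Lagrange-multiplier computation leaves implicit (attainment of the maximum and critical points on the boundary where some $\alpha_j=0$, where $\ln\alpha_j$ degenerates) — and it pins down the maximizing $\alpha$ uniquely at each equality point; the paper's argument is more elementary and self-contained (Jensen/AM--GM only, no entropy or character sums) and produces the explicit equality condition directly. Both arguments invoke $\gcd(p,d)=1$ at the same final step, and both yield exactly the statement needed downstream in Proposition \ref{p:ldp}.
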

\begin{proof}
We view the constraint \eqref{e:constraint} as a defining relation for $(\fn_0,\fn_1,\cdots, \fn_{p-1})$, as a function of $\alpha_1,\alpha_2,\cdots,\alpha_{p^{d-1}}$ with the constraint $\alpha_1+\alpha_2+\cdots+\alpha_{p^{d-1}}=1$. Then we can find the extreme points of the lefthand side of \eqref{e:neg} by the method Lagrange multipliers. We define the function 
\begin{align*}
f(\alpha_1,\alpha_2,\cdots, \alpha_{p^{d-1}},\la)=-\sum_{j=1}^{p^{d-1}}\alpha_j\ln \alpha_j+(d-1)\sum_{j=0}^{p-1}\fn_j\ln \fn_j+\la\left(\sum_{j=1}^{p^{d-1}}\alpha_j-1\right).
\end{align*}
Let $\del_{\alpha_j} f=0$, we get that extreme points satisfy
\begin{align}\label{e:character}
-\ln \alpha_j -1 +\frac{d-1}{d}\sum_{k=0}^{p-1}\left(\bmw_j(k)\ln \fn_k+\bmw_j(k)\right)+\la=0.
\end{align}
We notice that $\sum_{k=0}^{p-1}\bmw_j(k)=d$ for $1\leq j\leq p^{d-1}$. By multiplying both sides of \eqref{e:character} by $\alpha_j$, and summing over $j=1,2,\cdots p^{d-1}$, we get
\begin{align}\label{e:goal}
-\sum_{j=1}^{p^{d-1}}\alpha_j\ln \alpha_j+(d-1)\sum_{j=0}^{p-1}\fn_j\ln \fn_j=-(d-2+\la).
\end{align}
It follows by rearranging \eqref{e:character}, we get
\begin{align*}
\alpha_j=e^{d-2+\la}\prod_{k=0}^{p-1}\fn_k^{\frac{d-1}{d}\bmw_j(k)}.
\end{align*}
Therefore, $\la$ satisfies
\begin{align*}
1=\sum_{j=1}^{p^{d-1}}\alpha_j
=e^{d-2+\la}\sum_{j=1}^{p^{d-1}}\prod_{k=0}^{p-1}\fn_k^{\frac{d-1}{d}\bmw_j(k)}.
\end{align*}
By the defining relation of the multiset $\cU_{d,p}$ as in \eqref{e:defcU}, we have
\begin{align}\begin{split}\label{e:am-gm}
\sum_{j=1}^{p^{d-1}}\prod_{k=0}^{p-1}\fn_k^{\frac{d-1}{d}\bmw_j(k)}
&=\sum_{(a_1,a_2,\cdots,a_{d})\in \bF_p^d,\atop a_1+a_2+\cdots+a_d=\bm0}\prod_{k=1}^d \fn_{a_k}^{\frac{d-1}{d}}\\
&\leq \sum_{(a_1,a_2,\cdots,a_{d})\in \bF_p^d,\atop a_1+a_2+\cdots+a_d=\bm0}\frac{1}{d}\sum_{l=1}^d\prod_{1\leq k\leq d, k\neq l} \fn_{a_k}=\left(\sum_{j=0}^{p-1} \fn_j\right)^{d-1}=1,
\end{split}\end{align}
where we used the AM-GM inequality. Therefore, we can conclude $d-2+\la\geq 0$, and the claim \eqref{e:neg} follows from \eqref{e:goal}. The equality in \eqref{e:neg} holds if and only if the equality in \eqref{e:am-gm} holds, which implies that
\begin{align}\label{e:equalitycond}
\sum_{l=1}^d\prod_{1\leq k\leq d, k\neq 0} \fn_{a_k}=\sum_{l=1}^d\prod_{1\leq k\leq d, k\neq 1} \fn_{a_k}=\cdots=\sum_{l=1}^d\prod_{1\leq k\leq d, k\neq p-1} \fn_{a_k},
\end{align}
for any $(a_1,a_2,\cdots, a_d)\in \bF_p^d$ with $a_1+a_2+\cdots+a_d=0$. Since by our assumption $\gcd(d,p)=1$, the only solutions for \eqref{e:equalitycond} are $\fn_0=\fn_1=\cdots=\fn_{p-1}=1/p$, or $\fn_0=1$, $\fn_1=\cdots=\fn_{p-1}=0$. This finishes the proof of Proposition \ref{p:ldpbound}. 
\end{proof}

\begin{proof}[Proof of Proposition \ref{p:ldp}]
We further decompose the set of non-equidistributed $p$-tuples $(n_0,n_1,\cdots, n_{p-1})$ into four classes:
\begin{enumerate}
\item $p$-tuples $(n_0,n_1,\cdots, n_{p-1})\in \cN$ with $\sum_{j=0}^{p-1}|n_j/n-1/p|\leq \delta$.
\item $p$-tuples $(n_0,n_1,\cdots, n_{p-1})\in \cN$  with $\fb\ln n/n<|n_0/n-1|\leq \delta$.
\item $p$-tuples $(n_0,n_1,\cdots, n_{p-1})\in \cN$  with $|n_0/n-1|\leq \fb\ln n/n$.
\item The remaining non-equidistributed $p$-tuples.
\end{enumerate}

For the first class, $\sum_{j=0}^{p-1}|n_j/n-1/p|\leq \delta$. The total number of such $p$-tuples is $e^{\OO(\ln n)}$. 
Given a $p$-tuple $(n_0,n_1,\cdots,n_{p-1})$ in the first class,
we will derive a more precise estimate of \eqref{e:neg}, by a perturbation argument. Let 
\begin{align*}\begin{split}
&\alpha_j=\frac{1}{p^{d-1}}+\varepsilon_j,\quad j=1,2,\cdots, p^{d-1}.\\
&\fn_j=\frac{1}{p}+\delta_j,\quad j=0,1,\cdots, p-1.
\end{split}\end{align*}
where $\varepsilon_1+\varepsilon_2+\cdots+\varepsilon_{p^{d-1}}=0$, and
\begin{align}\label{e:constraint2}
\varepsilon_1\bmw_1+\varepsilon_2\bmw_2+\cdots+\varepsilon_{p^{d-1}}\bmw_{p^{d-1}}=d(\delta_0,\delta_1,\cdots,\delta_{p-1}).
\end{align}
We use Taylor expansion, and rewrite \eqref{e:neg} as
\begin{align*}
-\sum_{j=1}^{p^{d-1}}\alpha_j\ln \alpha_j+(d-1)\sum_{j=0}^{p-1}\fn_j\ln \fn_j
=\frac{(d-1)p}{2}\sum_{j=0}^{p-1}\delta_j^2-\frac{p^{d-1}}{2}\sum_{j=1}^{p^{d-1}}\varepsilon_j^2+\OO\left(\sum_{j=0}^{p-1}|\delta_j|^3+\sum_{j=1}^{p^{d-1}}|\varepsilon_j|^3\right).
\end{align*}
From \eqref{e:constraint2}, we have
\begin{align*}
d^2\sum_{j=0}^{p-1}\delta_j^2
=\sum_{1\leq j,k\leq p^{d-1}}\varepsilon_i\varepsilon_j\langle\bmw_i,\bmw_j\rangle.
\end{align*}
The Gram matrix $[\langle \bmw_j,\bmw_k\rangle]_{1\leq j,k\leq p^{d-1}}$ of the vectors $\{\bmw_j\}_{1\leq j\leq p^{d-1}}$ has the same nonzero eigenvalues as the matrix
\begin{align*}
\sum_{j=1}^{p^{d-1}}\bmw_j\bmw_j^t=dp^{d-2}I_{p}+d(d-1)p^{d-3}\bm1\bm1^t,
\end{align*}
where we used \eqref{e:cov}. Thus, the Gram matrix $[\langle \bmw_j,\bmw_k\rangle]_{1\leq j,k\leq p^{d-1}}$ of the vectors $\{\bmw_j\}_{1\leq j\leq p^{d-1}}$ has an eigenvalue $d^2p^{d-2}$ corresponding to the eigenvector $(1,1,\cdots,1)$, $d-1$ eigenvalues $dp^{d-2}$, and all other eigenvalues are zero. Therefore, for $\varepsilon_1+\varepsilon_2+\cdots+\varepsilon_{p^{d-1}}=0$, we have
\begin{align*}
\sum_{j=0}^{p-1}\delta_j^2
=\frac{1}{d^2}\sum_{1\leq j,k\leq p^{d-1}}\varepsilon_i\varepsilon_j\langle\bmw_i,\bmw_j\rangle
\leq \frac{p^{d-2}}{d}\sum_{j=1}^{p^{d-1}}\varepsilon_j^2,
\end{align*}
and 
\begin{align*}
-\sum_{j=1}^{p^{d-1}}\alpha_j\ln \alpha_j+(d-1)\sum_{j=0}^{p-1}\fn_j\ln \fn_j\leq-\left(\frac{p}{2}+\oo(1)\right)\sum_{j=0}^{p-1}\delta_j^2\leq -\left(\frac{\fb p}{2}+\oo(1)\right)\frac{\ln n}{n}.
\end{align*}
The total contribution of terms in \eqref{e:noneq} satisfying $\sum_{j=0}^{p-1}|n_j/n-1/p|\leq \delta$ is bounded by
\begin{align}\label{e:case1}
\exp\left\{-\left(\frac{\fb p}{2}+\oo(1)\right)\ln n+\OO(\ln n)\right\}=\frac{\oo(1)}{n^{(d-2)}},
\end{align}
provided that we take $\fb$ sufficiently large.

For the second class, $\fb\ln n/n<|n_0/n-1|\leq \delta$. The total number of such $p$-tuples is $e^{\OO(\ln n)}$. Given a $p$-tuple $(n_0,n_1,\cdots,n_{p-1})$ in the second class, we will derive a more precise estimate of \eqref{e:neg}, by a perturbation argument. Let 
\begin{align*}\begin{split}
&\alpha_1=1-\varepsilon_1,\quad \alpha_j=\varepsilon_j,\quad j=2,3,\cdots, p^{d-1}.\\
&\fn_0=1-\varepsilon_1+\delta_0,\quad \fn_j=\delta_j,\quad j=1,2,\cdots, p-1.
\end{split}\end{align*}
where $\varepsilon_1=\varepsilon_2+\varepsilon_3+\cdots+\varepsilon_{p^{d-1}}$, and
\begin{align}\label{e:constraint3}
\varepsilon_2\bmw_2+\varepsilon_3\bmw_3+\cdots+\varepsilon_{p^{d-1}}\bmw_{p^{d-1}}=d(\delta_0,\delta_1,\cdots,\delta_{p-1}).
\end{align}
The assumption $\fb\ln n/n<|n_0/n-1|\leq \delta$ is equivalent to that $\fb\ln n/n<\delta_1+\delta_2+\cdots+\delta_{p-1}\leq \delta$.
We use Taylor expansion, and rewrite \eqref{e:neg} as
\begin{align}\begin{split}\label{e:est}
&\phantom{{}={}}-\sum_{j=1}^{p^{d-1}}\alpha_j\ln \alpha_j+(d-1)\sum_{j=0}^{p-1}\fn_j\ln \fn_j\\
&=-(d-2)\varepsilon_1+(d-1)\delta_0+\OO(\varepsilon_1^2)
-\sum_{j=2}^{p^{d-1}}\varepsilon_j\ln \varepsilon_j+(d-1)\sum_{j=1}^{p-1}\delta_j\ln \delta_j.
\end{split}\end{align}
We notice that for $2\leq j\leq p^{d-1}$, $\bmw_j(0)\leq d-2$ and $\bmw_j(1)+\bmw_j(2)+\cdots \bmw_j(p-1)\geq 2$.
From the constraint \eqref{e:constraint3}, we have
\begin{align}\label{e:dbound1}
d\delta_0=\varepsilon_2\bmw_2(0)+\varepsilon_3\bmw_3(0)+\cdots+\varepsilon_{p^{d-1}}\bmw_{p^{d-1}}(0)\leq (d-2)\varepsilon_1,
\end{align}
and 
\begin{align}\label{e:dbound2}
\delta_j\ln \left(\frac{\delta_j}{p^{d-2}}\right)
\leq\sum_{k=2}^{p^{d-1}} \frac{\bmw_k(j)}{d}\varepsilon_k\ln \varepsilon_k, \quad j=1,2,\cdots,p-1,
\end{align}
where we used the Jensen's inequality. Using \eqref{e:dbound1} and \eqref{e:dbound2}, we can upper bound \eqref{e:est} as
\begin{align*}
&\phantom{{}={}}-\sum_{j=1}^{p^{d-1}}\alpha_j\ln \alpha_j+(d-1)\sum_{j=0}^{p-1}\fn_j\ln \fn_j
\leq -\delta_0+\OO(\delta_0^2)+\frac{d-2}{2}\sum_{j=1}^{p-1}\delta_j\ln \delta_j\\
&\leq -\delta_0+\OO(\delta_0^2)+\frac{d-2}{2}\sum_{j=1}^{p-1}\delta_j\ln \left(\sum_{j=1}^{p-1}\delta_j\right)\leq -(1+\oo(1))\frac{\fb(d-2)}{2}\frac{(\ln n)^2}{n}.
\end{align*}
Thus, the total contribution of terms in \eqref{e:noneq} satisfying $\fb\ln n/n<|n_0/n-1|\leq \delta$ is bounded by
\begin{align}\label{e:case2}
\exp\left\{-(1+\oo(1))\frac{\fb(d-2)}{2}(\ln n)^2+\OO(\ln n)\right\}=\frac{\oo(1)}{n^{(d-2)}}.
\end{align}

For the third class, $|n_0/n-1|\leq \fb\ln n/n$. We denote $n_0=n-m$ and $2\leq m\leq \fb \ln n$. Fix $m$, the total number of such $p$-tuples 
\begin{align}\label{e:num}
|\{(n_1,n_2,\cdots, n_{p-1})\in \bZ_{\geq 0}^{p-1}: n_1+n_2+\cdots+n_{p-1}=m\}|=\OO(p^m).
\end{align}
Given a $p$-tuple $(n_0,n_1,\cdots,n_{p-1})$, in the third class with $n_0=n-m$ and $m\leq \fb\ln n$, we reestimate the first factor on the righthand side of \eqref{e:expldp},
\begin{align}\begin{split}\label{e:entr}
{n\choose n_0,n_1,\cdots, n_{p-1}}{dn\choose dn_0,dn_1,\cdots, dn_{p-1}}^{-1}
\leq \OO(1)d^m\left(\frac{dm}{en}\right)^{(d-1)m}.
\end{split}\end{align}
For the number of walk paths in \eqref{e:expldp}, we notice that $\bmw_j(1)+\bmw_j(2)+\cdots \bmw_j(p-1)\geq 2$ for $2\leq j\leq p^{d-1}$. Moreover, if $\bmu_1+\bmu_2+\cdots+\bmu_n=(dn_0, dn_{1},\cdots, dn_{p-1})$, then $dn_1+dn_2+\cdots+dn_{p-1}=dm$.
Therefore,  $\bmu_i=\bmw_1$ for all $1\leq i\leq n$, except for at most $dm/2$ of them, and we have
\begin{align}\begin{split}\label{e:pathest}
|\{(\bmu_1,\bmu_2\cdots, \bmu_n)\in \cU_{d,p}^n: \bmu_1+\bmu_2+\cdots+\bmu_n=(dn_0, dn_{1},\cdots, dn_{p-1})\}|
\leq \left(p^{d-1}n\right)^{dm/2}.
\end{split}\end{align}
Putting \eqref{e:num}, \eqref{e:entr} and \eqref{e:pathest} together, the total contribution of terms in \eqref{e:noneq} satisfying $|n_0/n-1|\leq \fb\ln n/n$ is bounded by
\begin{align}\label{e:case3}
\sum_{m=2}^{\fb \ln n}\OO(1)p^m d^m\left(\frac{dm}{en}\right)^{(d-1)m}\left(p^{d-1}n\right)^{dm/2}=\frac{\OO(1)}{n^{(d-2)}}.
\end{align}

For the last class, the total number of such $p$-tuples is $e^{\OO(\ln n)}$, and each term is exponentially small, i.e. $e^{-c(\delta)n}$. Therefore the total contribution is
\begin{align}\label{e:case4}
\exp\{-c(\delta) n+\OO(\ln n)\}=\frac{\oo(1)}{n^{(d-2)}}.
\end{align}

The claim \eqref{e:noneq} follows from combining the discussion of all four cases, \eqref{e:case1}, \eqref{e:case2}, \eqref{e:case3} and \eqref{e:case4}.
\end{proof}
\bibliography{References.bib}{}

\begin{thebibliography}{10}

\bibitem{basak2018}
A.~Basak, N.~Cook, and O.~Zeitouni.
\newblock Circular law for the sum of random permutation matrices.
\newblock {\em Electron. J. Probab.}, 23:51 pp., 2018.

\bibitem{BR}
A.~Basak and M.~Rudelson.
\newblock The circular law for sparse non-hermitian matrices.
\newblock {\em preprint: arXiv: 1707.03675}, 2017.

\bibitem{MR3620692}
A.~Basak and M.~Rudelson.
\newblock Invertibility of sparse non-{H}ermitian matrices.
\newblock {\em Adv. Math.}, 310:426--483, 2017.

\bibitem{MR0505796}
E.~A. Bender and E.~R. Canfield.
\newblock The asymptotic number of labeled graphs with given degree sequences.
\newblock {\em J. Combinatorial Theory Ser. A}, 24(3):296--307, 1978.

\bibitem{MR595929}
B.~Bollob\'as.
\newblock A probabilistic proof of an asymptotic formula for the number of
  labelled regular graphs.
\newblock {\em European J. Combin.}, 1(4):311--316, 1980.

\bibitem{MR2557947}
J.~Bourgain, V.~H. Vu, and P.~M. Wood.
\newblock On the singularity probability of discrete random matrices.
\newblock {\em J. Funct. Anal.}, 258(2):559--603, 2010.

\bibitem{Nik1}
N.~A. Cook.
\newblock The circular law for random regular digraphs.
\newblock {\em preprint, arXiv: 1703.05839}, 2017.

\bibitem{MR3602844}
N.~A. Cook.
\newblock On the singularity of adjacency matrices for random regular digraphs.
\newblock {\em Probab. Theory Related Fields}, 167(1-2):143--200, 2017.

\bibitem{MR2267289}
K.~P. Costello, T.~Tao, and V.~Vu.
\newblock Random symmetric matrices are almost surely nonsingular.
\newblock {\em Duke Math. J.}, 135(2):395--413, 2006.

\bibitem{MR2446482}
K.~P. Costello and V.~H. Vu.
\newblock The rank of random graphs.
\newblock {\em Random Structures Algorithms}, 33(3):269--285, 2008.

\bibitem{FLMS}
A.~Ferber, K.~Luh, G.~McKinley, and W.~Samotij.
\newblock Some problems on random discrete matrices.
\newblock {\em In preparation}, 2018.

\bibitem{MR3728474}
A.~Frieze.
\newblock Random structures and algorithms.
\newblock In {\em Proceedings of the {I}nternational {C}ongress of
  {M}athematicians---{S}eoul 2014. {V}ol. 1}, pages 311--340. Kyung Moon Sa,
  Seoul, 2014.

\bibitem{Huang}
J.~Huang.
\newblock Invertibility of adjacency matrices for random d-regular graphs.
\newblock {\em preprint: arXiv: 1807.06465}, 2018.

\bibitem{MR1260107}
J.~Kahn, J.~Koml\'os, and E.~Szemer\'edi.
\newblock On the probability that a random {$\pm 1$}-matrix is singular.
\newblock {\em J. Amer. Math. Soc.}, 8(1):223--240, 1995.

\bibitem{MR0221962}
J.~Koml\'os.
\newblock On the determinant of {$(0,\,1)$} matrices.
\newblock {\em Studia Sci. Math. Hungar}, 2:7--21, 1967.

\bibitem{MR0238371}
J.~Koml\'os.
\newblock On the determinant of random matrices.
\newblock {\em Studia Sci. Math. Hungar.}, 3:387--399, 1968.

\bibitem{fix2}
B.~Landon, P.~Sosoe, and H.-T. Yau.
\newblock Fixed energy universality of {D}yson {B}rownian motion.
\newblock {\em preprint, arXiv: 1609.09011}, 2016.

\bibitem{MR3545253}
A.~E. Litvak, A.~Lytova, K.~Tikhomirov, N.~Tomczak-Jaegermann, and P.~Youssef.
\newblock Adjacency matrices of random digraphs: singularity and
  anti-concentration.
\newblock {\em J. Math. Anal. Appl.}, 445(2):1447--1491, 2017.

\bibitem{Alex1}
A.~E. Litvak, A.~Lytova, K.~Tikhomirov, N.~Tomczak-Jaegermann, and P.~Youssef.
\newblock The smallest singular value of a shifted d-regular random square.
\newblock {\em preprint, arXiv:1707.02635}, 2017.

\bibitem{Alex2}
A.~E. Litvak, A.~Lytova, K.~Tikhomirov, N.~Tomczak-Jaegermann, and P.~Youssef.
\newblock The rank of random regular digraphs of constant degree.
\newblock {\em preprint, arXiv: 1801.05577}, 2018.

\bibitem{Alex3}
A.~E. Litvak, A.~Lytova, K.~Tikhomirov, N.~Tomczak-Jaegermann, and P.~Youssef.
\newblock Structure of eigenvectors of random regular digraphs.
\newblock {\em preprint, arXiv:1801.05575}, 2018.

\bibitem{AM}
A.~M{\'e}sz{\'a}ros.
\newblock The distribution of sandpile groups of random regular graphs.
\newblock {\em preprint, arXiv: 1806.03736}, 2018.

\bibitem{MR2891529}
H.~H. Nguyen.
\newblock Inverse {L}ittlewood-{O}fford problems and the singularity of random
  symmetric matrices.
\newblock {\em Duke Math. J.}, 161(4):545--586, 2012.

\bibitem{NW}
H.~H. Nguyen and W.~M. Matchett.
\newblock Cokernels of adjacency matrices of random r-regular graphs.
\newblock {\em preprint, arXiv: 1806.10068}, 2018.

\bibitem{MR2407948}
M.~Rudelson and R.~Vershynin.
\newblock The {L}ittlewood-{O}fford problem and invertibility of random
  matrices.
\newblock {\em Adv. Math.}, 218(2):600--633, 2008.

\bibitem{MR2569075}
M.~Rudelson and R.~Vershynin.
\newblock Smallest singular value of a random rectangular matrix.
\newblock {\em Comm. Pure Appl. Math.}, 62(12):1707--1739, 2009.

\bibitem{MR2291914}
T.~Tao and V.~Vu.
\newblock On the singularity probability of random {B}ernoulli matrices.
\newblock {\em J. Amer. Math. Soc.}, 20(3):603--628, 2007.

\bibitem{MR3158627}
R.~Vershynin.
\newblock Invertibility of symmetric random matrices.
\newblock {\em Random Structures Algorithms}, 44(2):135--182, 2014.

\bibitem{MR2432537}
V.~Vu.
\newblock Random discrete matrices.
\newblock In {\em Horizons of combinatorics}, volume~17 of {\em Bolyai Soc.
  Math. Stud.}, pages 257--280. Springer, Berlin, 2008.

\bibitem{MR3727622}
V.~H. Vu.
\newblock Combinatorial problems in random matrix theory.
\newblock In {\em Proceedings of the {I}nternational {C}ongress of
  {M}athematicians---{S}eoul 2014. {V}ol. {IV}}, pages 489--508. Kyung Moon Sa,
  Seoul, 2014.

\bibitem{MR527727}
N.~Wormald.
\newblock Enumeration of labelled graphs. {I}. {$3$}-connected graphs.
\newblock {\em J. London Math. Soc. (2)}, 19(1):7--12, 1979.

\bibitem{MR545196}
N.~C. Wormald.
\newblock Enumeration of labelled graphs. {II}. {C}ubic graphs with a given
  connectivity.
\newblock {\em J. London Math. Soc. (2)}, 20(1):1--7, 1979.

\end{thebibliography}
\bibliographystyle{abbrv}

\end{document}